\providecommand{\keywords}[1]{\textbf{\textit{Index terms---}} #1}
\newtheorem{thm}{Theorem}
\newtheorem{lemma}[thm]{Lemma}
\declaretheoremstyle[notefont=\bfseries,notebraces={}{},%
    headpunct={},postheadspace=1em,spaceabove=0.5em,spacebelow=0.5em]{mystyle}
\declaretheorem[style=mystyle,numbered=no,name=Theorem]{thm-hand}
\declaretheorem[style=mystyle,numbered=no,name=Conjecture]{conj-hand}
\declaretheorem[style=mystyle,numbered=no,name=Definition.]{defn-hand}
\declaretheorem[style=mystyle,numbered=no,name=Theorem.]{thm-no}
\declaretheorem[style=mystyle,numbered=no,name=Conjecture.]{conj-no}
\declaretheorem[style=mystyle,numbered=no,name=Lemma.]{lemma-no}
\declaretheorem[style=mystyle,numbered=no,name=Lemma]{lemma-hand}
\def\resetMathstrut@{%
  \setbox\z@\hbox{%
    \mathchardef\@tempa\mathcode`\[\relax
    \def\@tempb##1"##2##3{\the\textfont"##3\char"}%
    \expandafter\@tempb\meaning\@tempa \relax
  }%
  \ht\Mathstrutbox@\ht\z@ \dp\Mathstrutbox@\dp\z@}
\newtheorem{proposition}[thm]{Proposition}
\newtheorem{conj}[thm]{Conjecture}
\newtheorem{prob}[thm]{Question}
\theoremstyle{definition}
\newtheorem{defn}[thm]{Definition}
\theoremstyle{remark}
\newcommand{\Z}{\mathbb Z}
\newcommand{\nc}{\newcommand}
\nc{\on}{\operatorname}
\nc{\Spec}{\on{Spec}}
\title{On a Conjecture Regarding Permutations which Destroy Arithmetic Progressions} 
\author{Mehtaab Sawhney}
\thanks{Massachusetts Institute of Technology, Cambridge MA. Email: \texttt{msawhney@mit.edu}}
\author{David Stoner}
\thanks{Harvard University, Cambridge MA. Email: \texttt{dstoner@college.harvard.edu}}
\date{\today}
\begin{document}

\maketitle
\vspace{-10mm}
\begin{abstract}
Hegarty conjectured for $n\neq 2, 3, 5, 7$ that $\Z/n\Z$ has a permutation which destroys all arithmetic progressions mod $n$. For $n\ge n_0$, Hegarty and Martinsson demonstrated that $\mathbb{Z}/n\Z$ has an arithmetic-progression destroying permutation. However $n_0\approx 1.4\times 10^{14}$ and thus resolving the conjecture in full remained out of reach of any computational techniques. However this paper, using constructions modeled after those used by Elkies and Swaminathan for the case of $\Z/p\Z$ with $p$ being prime, establishes the conjecture in full. Furthermore our results do not rely on the fact that it suffices to study when $n<n_0$ and thus our results completely independent of the proof given by Hegarty and Martinsson.
\end{abstract}
\keywords{\textbf{Keywords: }arithmetic progression, permutation}

\subjclass{\textbf{AMS subject classifications:} 11B75, 11L40}
\section{Background}
In 2004 Hegarty \cite{hegarty2004permutations} introduced the notion of permutations that destroy arithmetic progressions in finite cyclic groups.
\begin{defn}
Given a permutation $\pi:\Z/n\Z\to \Z/n\Z$, a three term arithmetic progression $(a, a+r, a+2r)$, with not all terms equal, is called \textit{preserved} in $\pi$ if $\pi(a+2r)-2\pi(a+r)+\pi(a)=0$. A permutation is said to \textit{destroy} all arithmetic progressions if it has no preserved arithmetic progressions.
\end{defn}
For the sake of simplicity, a three-term arithmetic progression will be denoted an AP and a permutation that destroys all APs will be called AP-Destroying. This notion can be extended to permutations which destroy $k$-term arithmetic progressions and Hegarty \cite{hegarty2004permutations} demonstrated that for $n\neq 3,4$ there exists a permutation of $\Z/n\Z$ that destroys all $k$-term arithmetic progressions for all $k\ge 4$. However, classifying which cyclic groups have an AP-Destroying permutation has been resistant to proof. In particular Hegarty \cite{hegarty2004permutations} gave the following conjecture regarding AP-Destroying permutations based on computational evidence.
\begin{conj}
For $n\not\in\left\{2,3,5,7\right\}$, there exists an AP-Destroying permutation $\pi:\Z/n\Z\to \Z/n\Z$.
\end{conj}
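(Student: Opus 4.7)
The plan is to reduce the conjecture to prime power moduli by a Chinese Remainder Theorem product construction, and then to build AP-destroying permutations on each prime power modeled on the Elkies--Swaminathan constructions for primes.

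For the CRT reduction, suppose $n = n_1 n_2$ with $\gcd(n_1, n_2) = 1$ and that AP-destroying permutations $\pi_i : \Z/n_i\Z \to \Z/n_i\Z$ have been constructed for $i = 1, 2$. Define $\pi$ on $\Z/n\Z \cong \Z/n_1\Z \times \Z/n_2\Z$ by $\pi(x) \equiv \pi_i(x) \pmod{n_i}$. For any AP $(a, a+r, a+2r)$ with $r \not\equiv 0 \pmod{n}$, at least one of $r \not\equiv 0 \pmod{n_1}$ or $r \not\equiv 0 \pmod{n_2}$ must hold; if say $r \not\equiv 0 \pmod{n_2}$, then the projection to $\Z/n_2\Z$ is a nontrivial AP which $\pi_2$ fails to preserve, so the second difference $\pi(a) - 2\pi(a+r) + \pi(a+2r)$ is already nonzero modulo $n_2$, hence nonzero modulo $n$. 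Iterating, this reduces the problem to producing AP-destroying permutations of $\Z/p^k\Z$ for every prime power $p^k \notin \{2,3,5,7\}$.

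For the prime power step, for each prime $p \geq 11$ and exponent $k \geq 1$, I would attempt to adapt the Elkies--Swaminathan construction on $\Z/p\Z$ to $\Z/p^k\Z$. The natural attempt is to combine their base permutation on $\Z/p\Z$ (applied via the quotient $\Z/p^k\Z \to \Z/p\Z$) with a carefully chosen permutation on each fiber of this quotient, so that an AP in $\Z/p^k\Z$ with common difference $r$ not divisible by $p$ is killed by the base permutation (since its projection to $\Z/p\Z$ is a nontrivial AP), while an AP with $p \mid r$ is killed by the fiberwise part. For the remaining small primes $p \in \{2,3,5,7\}$ with $p^k \notin \{2,3,5,7\}$, I would verify the smallest cases ($4, 8, 9, 16, 25, 27, 32, 49, \dots$) by direct construction or short computer search, then cover the infinite families by an analogous inductive lifting from $\Z/p^k\Z$ to $\Z/p^{k+1}\Z$.

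The main obstacle I expect is the lifting step: the Elkies--Swaminathan analysis depends crucially on $\Z/p\Z$ being a field, where every nontrivial common difference is invertible and convenient algebraic identities are available. In $\Z/p^k\Z$, APs with $r$ divisible by $p$ must be analyzed as a separate case, and ensuring that the fiberwise part breaks every such AP while not interfering with the base Elkies--Swaminathan layer is the delicate combinatorial heart of the argument. A secondary obstacle is the small-prime case $p \in \{2,3,5,7\}$: because these are exactly the exceptional primes of the conjecture, there is little room for error, and constructions on $\Z/2^k\Z, \Z/3^k\Z, \Z/5^k\Z, \Z/7^k\Z$ must largely be done from scratch rather than absorbed into the generic framework.
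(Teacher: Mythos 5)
There is a genuine gap, and it sits at the very first step: your claim that the CRT product construction ``reduces the problem to producing AP-destroying permutations of $\Z/p^k\Z$ for every prime power $p^k \notin \{2,3,5,7\}$'' is false. The product construction requires \emph{both} coprime factors to admit AP-destroying permutations, and the groups $\Z/2\Z$, $\Z/3\Z$, $\Z/5\Z$, $\Z/7\Z$ admit none (these are exactly the exceptional cases of the conjecture). So for $n = qp$ with $q \in \{2,3,5,7\}$ and $p \geq 11$ prime --- e.g.\ $n = 22, 33, 55, 77$ --- the only coprime factorization is $q \times p$, one factor is exceptional, and your reduction produces nothing. These five infinite families $\{p, 2p, 3p, 5p, 7p\}$ are precisely where the paper does essentially all of its work: it glues Elkies--Swaminathan-type permutations across the fibers of $\Z/q\Z \times \Z/p\Z \to \Z/q\Z$ (inversion maps $x \mapsto c/x$ on each fiber with carefully chosen constants, plus a corrective transposition in the $2p$ case), and uses Hasse--Weil character-sum estimates to show that valid parameters exist for $p$ large, with finite computation covering small $p$. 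None of that is present or anticipated in your outline.

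A secondary point: the prime-power lifting you identify as ``the delicate combinatorial heart'' is actually unnecessary, because the paper uses Hegarty's stronger product theorem, which builds an AP-destroying permutation of $\Z/mn\Z$ from ones on $\Z/m\Z$ and $\Z/n\Z$ \emph{without} assuming coprimality (it is a subgroup/quotient construction, not CRT). With that theorem, $\Z/p^k\Z$ for $p \geq 11$ follows immediately from the prime case, and the small prime powers $4, 8, 9, 25, 27, 49, \dots$ reduce to a finite list of products of two or three primes from $\{2,3,5,7\}$ that can be checked directly. So your proposal spends its effort on a step that a stronger known reduction makes trivial, while omitting the cases that the reduction genuinely cannot reach.
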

This conjecture was proved for sufficiently large $n$ by Hegarty and Martinsson \cite{hegarty2015permutations} in 2015.
\begin{thm}\label{current}
For $n\ge(9\times11\times16\times17\times19\times23)^2$, there exists a AP-Destroying permutation $\pi:\Z/n\Z\to \Z/n\Z$.
\end{thm}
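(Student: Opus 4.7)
The plan is a two-scale construction combining a deterministic template modulo $N := 9 \cdot 11 \cdot 16 \cdot 17 \cdot 19 \cdot 23$ with a probabilistic lifting to $\Z/n\Z$. First I would construct an AP-destroying permutation $\sigma$ of $\Z/N\Z$ via the Chinese Remainder Theorem: for each prime power $m \in \{9, 11, 16, 17, 19, 23\}$, a direct computer check furnishes an AP-destroying permutation $\sigma_m$ of $\Z/m\Z$, and combining via CRT yields $\sigma$. This $\sigma$ destroys every nondegenerate AP in $\Z/N\Z$ because the preservation condition $\sigma(a+2r) - 2\sigma(a+r) + \sigma(a) \equiv 0 \pmod N$ decomposes modulo each prime power, and any nondegenerate AP in $\Z/N\Z$ reduces to a nondegenerate AP modulo at least one $m$ (namely one for which $m \nmid r$), where $\sigma_m$ destroys it.

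Assuming $n = NM$ with $M \geq N$ (the case $N \nmid n$ requires a minor adjustment on a small residual block), I would define
\[
\pi(x) = \sigma(z) + N \rho_z(t), \qquad x = z + Nt,\ z \in [0, N),\ t \in [0, M),
\]
where $\{\rho_z\}_{z \in \Z/N\Z}$ is a family of permutations of $\Z/M\Z$ yet to be chosen. A short calculation shows $\pi$ is a bijection of $\Z/n\Z$ for any choice of the $\rho_z$. Moreover, any AP $(a, a+r, a+2r)$ with $N \nmid r$ reduces mod $N$ to a nondegenerate AP and is therefore destroyed via $\sigma$, while one with $r = Ns \neq 0$ has all three terms sharing a common residue $z$ mod $N$, and preservation collapses to $\rho_z(t+2s) - 2\rho_z(t+s) + \rho_z(t) \equiv 0 \pmod M$. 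Hence it suffices to choose each $\rho_z$ to be AP-destroying on $\Z/M\Z$.

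The problem is thus reduced to producing an AP-destroying permutation of $\Z/M\Z$ for every $M \geq N$. I would attack this via the probabilistic method: for a uniform random permutation $\rho$ of $\Z/M\Z$, the expected number of preserved APs is $\Theta(M)$, so one needs an amplification. Two natural routes are the Lov\'asz Local Lemma applied to the events $B_{t,s} = \{\rho \text{ preserves } (t, t+s, t+2s)\}$---each with probability $O(1/M)$ and depending on $O(M)$ others---or an iterative swapping argument that repairs preserved APs while controlling the creation of new ones.

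This last probabilistic step is the main obstacle: the bound $M \geq N$ provides just enough room for the LLL or switching estimates to close, and careful bookkeeping of degenerate configurations (very short APs, APs passing through repeated values, and APs interacting with boundary blocks in the $N \nmid n$ case) is essential. Once this lemma on existence of AP-destroying permutations of $\Z/M\Z$ for $M \geq N$ is in place, the two-scale construction assembles it with $\sigma$ into the required AP-destroying permutation of $\Z/n\Z$, completing the proof for all $n \geq N^2$.
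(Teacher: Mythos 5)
First, for orientation: this statement is not proved in the paper at all --- it is the theorem of Hegarty and Martinsson, quoted from \cite{hegarty2015permutations} as prior work, and the whole point of the present paper is to give constructions that do not rely on it. So your proposal has to stand on its own, and it does not. The two-scale construction you describe is exactly the product theorem (Theorem \ref{Product}): an AP-destroying permutation of $\Z/N\Z$ together with one of $\Z/M\Z$ yields one of $\Z/NM\Z$, and your CRT step and your $\pi(x)=\sigma(z)+N\rho_z(t)$ step are both instances of it. The trouble is that the lemma you then require --- that \emph{every} $M\ge N$ admits an AP-destroying permutation of $\Z/M\Z$ --- is strictly stronger than the theorem you are trying to prove, since every $n\ge N^2$ is in particular $\ge N$. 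The reduction therefore buys nothing; all of the content is deferred to the unproved lemma. Worse, the decomposition $n=NM$ is simply unavailable when $n$ is prime (and there are primes exceeding $N^2$): a cyclic group of prime order has no ``small residual block,'' so the case $N\nmid n$ is not a minor adjustment but the main case, and it forces you back to constructing AP-destroying permutations of $\Z/p\Z$ from scratch --- which is what Elkies and Swaminathan's explicit $x\mapsto t/x$ construction, adopted throughout this paper, is for.

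The probabilistic step also fails as described. For a uniform random permutation $\rho$ of $\Z/M\Z$ there are about $M^2/2$ arithmetic progressions, each preserved with probability about $1/M$, so the expected number of preserved APs is $\Theta(M)$ and the first moment is useless, as you note. But the Local Lemma does not rescue this: each bad event $B_{t,s}$ has probability $p\sim 1/M$ and (in the lopsided/permutation version, which is what one must use since a random permutation is not a product measure) is dependent on the $d=\Theta(M)$ other events sharing one of its three domain points, so $pd$ is a constant \emph{independent of $M$}, numerically around $9/2$, which violates $epd\le 1$. Taking $M\ge N$ large provides no extra room because $pd$ is scale-invariant; the assertion that the bound ``provides just enough room for the LLL or switching estimates to close'' is the entire difficulty of the Hegarty--Martinsson paper compressed into one unsupported sentence. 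An iterative swapping argument can in principle be made to work --- this is in the spirit of what Hegarty and Martinsson actually do, and of the descent heuristic in the computational section here --- but controlling the creation of new preserved APs during repairs is the hard part, and nothing in the proposal addresses it. In short, the deterministic scaffolding is a correct but contentless application of Theorem \ref{Product}, and the step carrying all the weight is missing.
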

However given that $(9\times11\times16\times17\times19\times23)^2\approx 1.4\times 10^{14}$, any purely computational approach is out of reach in order to establish Hegarty's original conjecture. We instead base our construction on that of Elkies and Swaminathan \cite{Ashvin}, who proved the following result.
\begin{thm}
Let $p$ be a prime with $p\ge 11$. Then there exists an AP-Destroying permutation $\pi:\Z/p\Z\to \Z/p\Z$.
\end{thm}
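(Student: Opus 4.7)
My strategy is to exhibit an explicit permutation and verify the AP-destroying property by direct algebraic manipulation. A natural base candidate is an algebraic map: a monomial $\pi_0(x)=x^k$ with $\gcd(k,p-1)=1$, or inversion $\pi_0(x)=x^{-1}$ (extended by $\pi_0(0)=0$). A direct expansion shows that such maps almost work. For instance, when $p\equiv 2\pmod 3$ one can take $\pi_0(x)=x^3$, and the identity
\[
\pi_0(a+2r)-2\pi_0(a+r)+\pi_0(a)=6r^2(a+r)
\]
shows that the only nontrivial preserved APs are those with middle term $0$. An analogous computation for $\pi_0(x)=x^{-1}$, after clearing denominators, also isolates a small explicit family of preserved APs through the origin. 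In each case the set of residual preserved APs is a one-parameter family sitting on a small defect set.

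The next step is to remove these residual preserved APs by modifying $\pi_0$ on a small set $S\subset\Z/p\Z$ containing the defect. Concretely, I would swap (or cyclically permute) the values of $\pi_0$ on a carefully chosen $S$ of bounded size, with two goals: (i) every AP preserved by $\pi_0$ is destroyed by the new permutation $\pi$, and (ii) no AP previously destroyed becomes preserved. These requirements translate into a finite system of nonequalities in $\F_p$. For each residue class of $p$ modulo a small modulus one can then choose the swap so that all nonequalities hold---by a greedy or counting argument once $p$ is sufficiently large, and by direct checking for a finite list of small primes such as $p=11,13,17,19,23$.

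The main obstacle is the verification in step two: each modification on $S$ affects every AP passing through $S$, a family of size $\Theta(p)$, so one must track which of these APs become preserved or destroyed. Carrying this out uniformly in $p$ forces one to distinguish $p\equiv 1\pmod 3$ from $p\equiv 2\pmod 3$ (since $x\mapsto x^3$ is not a bijection in the former case, where one instead uses a different exponent or the inversion map), and to dispatch a short list of small primes separately. Once these pieces are in place, the general argument reduces to the clean algebraic identity above for the base permutation, together with a short combinatorial argument controlling the local repair.
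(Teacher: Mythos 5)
Your overall strategy --- start from an algebraic bijection whose second difference factors, then repair the residual preserved APs by a bounded number of swaps --- is exactly the Elkies--Swaminathan strategy that this paper imports (Lemma \ref{hi} plus two transpositions). But as written there is a genuine gap, and it sits precisely at the step you yourself label ``the main obstacle.'' The base maps you propose do not leave a finite defect: for $\pi_0(x)=x^3$ your identity $6r^2(a+r)$ shows the preserved APs are all $(-r,0,r)$ with $r\neq 0$, which is $\Theta(p)$ progressions, and for $\pi_0(x)=x^{-1}$ with $\pi_0(0)=0$ the progressions $(-r,0,r)$ map to $(-\frac{1}{r},0,\frac{1}{r})$ and are again all preserved. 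Repairing a linear-sized family by bounded local surgery is not a finite check, and the naive repair fails: if you swap the values of $x\mapsto x^3$ at $0$ and $c$, the old family is destroyed, but $(0,r,2r)\mapsto(c^3,r^3,8r^3)$ becomes a newly preserved AP whenever $c^3\equiv -6r^3$, which has a solution in $r$ for every $c$ since cubing is a bijection when $p\equiv 2\pmod 3$ (and $(c,c+r,c+2r)\mapsto(0,(c+r)^3,(c+2r)^3)$ likewise produces a new preserved AP). So a single swap never suffices, and you give no mechanism to control the cascade of further repairs.

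The known proof avoids this by normalizing the base permutation more carefully: $\pi(0)=t$, $\pi(1)=0$, $\pi(x)=\frac{t}{x}$ otherwise. For this map the second-difference computation leaves only the two progressions $(0,\frac{3}{2},3)$ and $(\frac{1}{3},\frac{2}{3},1)$ preserved (Lemma \ref{hi}), i.e., a genuinely finite defect, after which two transpositions --- each verified against all $\Theta(p)$ progressions through the swapped points, which is exactly the case analysis your sketch defers --- complete the argument. Your fallback for $p\equiv 1\pmod 3$ is also shakier than you suggest: for $\pi_0(x)=x^5$ the second difference is $10r^2(a+r)(2a^2+4ar+3r^2)$, and the quadratic factor contributes a further one-parameter family of preserved APs off the origin whenever $-2$ is a square mod $p$, so ``a different exponent'' does not reduce to the same clean identity. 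To turn your plan into a proof you would need either the $\frac{t}{x}$ normalization (at which point you are reproducing \cite{Ashvin}) or a quantitative argument showing that some bounded modification of $x^3$ creates no new preserved APs, which you have not supplied.
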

Following this approach we establish the original conjecture of Hegarty \cite{hegarty2004permutations}.
\begin{thm}\label{main}
For $n\not\in\left\{2,3,5,7\right\}$, there exists a AP-Destroying permutation $\pi:\Z/n\Z\to \Z/n\Z$.
\end{thm}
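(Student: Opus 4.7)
The plan is to construct, for each $n \notin \{2,3,5,7\}$, an explicit AP-destroying permutation $\pi$ of $\Z/n\Z$, adapting the Elkies--Swaminathan construction from the prime case to arbitrary moduli. First I would partition the problem into a small number of cases based on the residue of $n$ modulo a small integer (for instance the parity of $n$, or $n \bmod 6$), so that a uniform template applies within each case. Within a case, the construction is piecewise affine: partition $\{0, 1, \ldots, n-1\}$ into a bounded number of intervals $I_1, \ldots, I_k$ whose endpoints scale linearly with $n$, and on each $I_j$ define $\pi(x) = \alpha x + \beta_j \pmod n$ for a fixed slope $\alpha$ coprime to $n$ and offsets $\beta_j$ that are themselves affine in $n$.

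Verification then has two parts. Bijectivity reduces to checking that the image intervals $\pi(I_j)$ tile $\Z/n\Z$, which is a finite system of congruences on the $\beta_j$ and on the interval lengths. For the AP-destroying property, the key observation is that if a three-term AP $(a, a+r, a+2r)$ with $r \not\equiv 0$ lies entirely within a single piece $I_j$ and the affine formula is not crossed by a wraparound, then $\pi(a+2r) - 2\pi(a+r) + \pi(a) \equiv 0 \pmod{n}$, so the pieces must be arranged so no genuine AP fits in a single piece modulo $n$. Otherwise, when the three points lie in distinct pieces $I_i, I_j, I_k$, the second difference reduces modulo $n$ to a combination $\beta_i - 2\beta_j + \beta_k$ together with a correction coming from the modular reduction of the affine map on each point, and the offsets must be chosen so that none of these combinations vanish modulo $n$.

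The main obstacle I expect is the combinatorial case analysis for APs straddling piece boundaries: as $(i, j, k)$ varies over all valid triples and as the AP itself can wrap around $\Z/n\Z$ in different ways, the offsets must simultaneously satisfy a large family of non-vanishing conditions. By parameterizing the offsets as affine functions of $n$ with carefully chosen rational coefficients (mirroring the Elkies--Swaminathan choice for primes), one hopes to reduce all these conditions to a finite list of integer inequalities that can be verified once and for all, independently of $n$. The handful of small $n$ outside the exceptional set that are not handled by the general construction can be dispatched by exhibiting explicit permutations individually, yielding a proof that, unlike Theorem~\ref{current}, does not rely on any asymptotic threshold.
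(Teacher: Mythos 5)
There is a fatal obstruction at the heart of your construction. Any affine map $x\mapsto \alpha x+\beta_j$ on $\Z/n\Z$ preserves \emph{every} three-term arithmetic progression it is applied to, since $\alpha(a+2r)+\beta_j-2(\alpha(a+r)+\beta_j)+(\alpha a+\beta_j)\equiv 0\pmod n$ identically; note that this holds after reduction mod $n$, so no ``wraparound correction'' can save you. Consequently, if any single piece $I_j$ contains three representatives of the form $a,a+1,a+2$, the progression $(a,a+1,a+2)$ (which is nondegenerate for $n>3$) is preserved. You acknowledge that ``no genuine AP may fit in a single piece,'' but with a \emph{bounded} number $k$ of intervals whose endpoints scale linearly in $n$, the pigeonhole principle forces some interval to have length at least $n/k\ge 3$ once $n>3k$, and such an interval always contains three consecutive integers. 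To avoid this you would need every piece to have at most two elements, i.e.\ at least $n/2$ pieces, destroying the ``finite list of conditions independent of $n$'' that your verification strategy relies on. So the piecewise-affine template cannot work for any fixed $k$, no matter how the slopes and offsets are chosen.

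This is also a misreading of the Elkies--Swaminathan construction you cite as your model: their permutation is not piecewise affine but is built from the genuinely nonlinear inversion map $x\mapsto t/x$ on $\Z/p\Z$, precisely because the second-difference condition for such a map becomes a quadratic in the common difference $r$, whose unsolvability can be forced by choosing $t$ so that certain Legendre symbols equal $-1$ (with existence of $t$ guaranteed by Hasse--Weil character sum bounds). The paper follows that route: it first reduces, via the multiplicativity theorem (an AP-destroying permutation of $\Z/m\Z$ and of $\Z/n\Z$ yields one of $\Z/mn\Z$), to the cases $n\in\{p,2p,3p,5p,7p\}$ with $p\ge 11$ prime plus finitely many small products of primes from $\{2,3,5,7\}$; it then glues inversion-type maps across the CRT factors, analyzes a finite case split on where the progression meets $\{0,1\}$ in each coordinate, and handles small $p$ and the sporadic cases computationally. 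Your case split by $n\bmod 6$ also misses this multiplicative reduction, which is what makes the remaining analysis tractable. The proposal as written cannot be repaired without abandoning the affine pieces entirely.
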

Note that Hegarty \cite{hegarty2004permutations} computationally checked that each of the values $n\in\left\{2,3,5,7\right\}$ does not have an AP-Destroying permutation, so it suffices to prove that the remaining values do have an AP-Destroying permutation.
\section{Preliminary Reductions}
The starting point for our proof is a theorem from Hegarty \cite{hegarty2004permutations} that can be used to simplify the general case to five infinite classes of integers and a finite exceptional set. (Note that the theorem given by Hegarty \cite{hegarty2004permutations} applies more generally for abelian groups.)
\begin{thm}\label{Product}
If there exists an AP-Destroying permutation for $\Z/m\Z$ and $\Z/n\Z$, there exists a AP-Destroying permutation for $\Z/mn\Z$. Note that $m$ and $n$ are not necessarily coprime.
\end{thm}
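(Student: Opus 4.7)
The plan is to construct $\pi: \Z/mn\Z \to \Z/mn\Z$ explicitly from the given permutations $\pi_1:\Z/m\Z\to\Z/m\Z$ and $\pi_2:\Z/n\Z\to\Z/n\Z$ using the base-$m$ decomposition coming from the surjection $\Z/mn\Z \twoheadrightarrow \Z/m\Z$. Every element of $\Z/mn\Z$ can be written uniquely as $a+mb$ with $a\in\{0,\dots,m-1\}$ and $b\in\{0,\dots,n-1\}$. I would set
\[
\pi(a+mb) := \pi_1(a) + m\,\pi_2(b),
\]
and first verify that $\pi$ is a bijection: the representatives $\pi_1(a) \in \{0,\dots,m-1\}$ and $\pi_2(b)\in\{0,\dots,n-1\}$ run over all residues independently as $(a,b)$ does, so $\pi$ is a permutation of $\Z/mn\Z$.

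Next I would check the AP-destroying property by case analysis on the common difference. Suppose $(x,x+r,x+2r)$ is a non-trivial AP in $\Z/mn\Z$ and write $x+ir = a_i + m b_i$ with $a_i,b_i$ in the standard ranges. Reducing mod $m$ gives the AP $(a_0,a_1,a_2)$ in $\Z/m\Z$. A direct computation yields
\[
\pi(x+2r) - 2\pi(x+r) + \pi(x) = \bigl[\pi_1(a_2)-2\pi_1(a_1)+\pi_1(a_0)\bigr] + m\bigl[\pi_2(b_2)-2\pi_2(b_1)+\pi_2(b_0)\bigr].
\]
If $r\not\equiv 0\pmod m$, then $(a_0,a_1,a_2)$ is a non-trivial AP mod $m$, and the AP-destroying property of $\pi_1$ forces the bracketed term on the left to be nonzero mod $m$; reducing the whole expression mod $m$ shows the full sum is nonzero in $\Z/mn\Z$. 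If instead $r\equiv 0\pmod m$, write $r=ms$; then $a_0=a_1=a_2$, so the first bracket vanishes identically as integers, and the triple $(b_0,b_1,b_2) = (b_0,b_0+s,b_0+2s)$ is a non-trivial AP in $\Z/n\Z$ since the original AP was non-trivial forces $s\not\equiv 0\pmod n$; the AP-destroying property of $\pi_2$ then makes the second bracket nonzero mod $n$, so the entire expression is a nonzero multiple of $m$ in $\Z/mn\Z$.

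The construction uses no coprimality hypothesis because the decomposition $a+mb$ is purely combinatorial rather than ring-theoretic (it does not rely on CRT). The only subtlety to be careful about is the case split on whether $r$ kills the $\Z/m\Z$ component of the AP, but in that situation the first bracket automatically vanishes as genuine integers (not merely mod $m$), which is exactly what is needed so that the AP in the $b$-coordinate inherits non-triviality and $\pi_2$ can be invoked. I do not anticipate any real obstacle beyond tracking these bookkeeping details.
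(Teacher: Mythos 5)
Your proof is correct and complete. The paper itself gives no proof of this theorem (it is quoted from Hegarty \cite{hegarty2004permutations}, where it is proved for general abelian group extensions), and your argument is exactly the standard one specialized to cyclic groups: the set-theoretic (non-CRT) decomposition $x = a + mb$ realizes $\Z/mn\Z$ as an extension of $\Z/m\Z$ by $\Z/n\Z$, and your case split on whether $r \equiv 0 \pmod m$ correctly handles the two ways a preserved AP could arise, with the key observation that when $a_0=a_1=a_2$ the first bracket cancels as integers so the $\Z/n\Z$-level AP is non-degenerate and $\pi_2$ applies.
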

Given this theorem it is possible to reduce the set of integers necessary to prove the desired result. This reduction is given without proof in Hegarty \cite{hegarty2004permutations}. (There appears to be a slight error in the version given by Hegarty \cite{hegarty2004permutations} as it excludes the case when $n=343$.)
\begin{thm}
In order to prove Theorem \ref{main}, it suffices to prove the cases $\{p, 2p, 3p, 5p, 7p~|~p$ prime and $p\ge 11\}$ and the integers $\left\{p_1p_2,p_1p_2p_3\right\}$ with $p_i\in\left\{2,3,5,7\right\}$, not necessarily distinct.
\end{thm}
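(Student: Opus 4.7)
The plan is to run a strong induction on $n \notin \{2,3,5,7\}$, at each step factoring $n = n_1 n_2$ so that each $n_i$ lies in the listed reduced set (or is handled by the inductive hypothesis), and then invoking Theorem \ref{Product}.

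First I would dispose of the case where every prime divisor of $n$ lies in $\{2,3,5,7\}$. Writing $n = q_1 q_2 \cdots q_k$ with $q_i \in \{2,3,5,7\}$, the subcases $k = 1$ and $k \in \{2,3\}$ are, respectively, excluded by assumption or listed directly as exceptional. For $k \ge 4$, every such $k$ can be written as $k = 2a + 3b$ with $a, b \ge 0$ (each integer $\ge 2$ is representable by $2$ and $3$); partitioning the $q_i$ into $a$ blocks of size $2$ and $b$ blocks of size $3$ expresses $n$ as a product of numbers each of the form $p_1 p_2$ or $p_1 p_2 p_3$ with $p_i \in \{2,3,5,7\}$, all of which admit AP-destroying permutations by hypothesis. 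Iterating Theorem \ref{Product} then yields one on $\Z/n\Z$.

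Next I would handle the case where $n$ has a prime divisor $p \ge 11$. Writing $n = p \cdot m$, either $m \in \{1, 2, 3, 5, 7\}$, so that $n \in \{p, 2p, 3p, 5p, 7p\}$ is already in the reduced list, or else $m \ge 4$ with $m \notin \{2, 3, 5, 7\}$. In the latter case $m < n$, so the strong inductive hypothesis supplies an AP-destroying permutation on $\Z/m\Z$; combining this with the permutation on $\Z/p\Z$ (available because $p$ is in the reduced list) through Theorem \ref{Product} concludes the step.

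I expect no serious obstacle, as the argument is essentially bookkeeping. The one delicacy is checking that the cofactor $m$ never drifts back into $\{2,3,5,7\}$ during the induction, but this is absorbed precisely by the primary list $\{p, 2p, 3p, 5p, 7p\}$. The three-prime exceptional set $\{p_1 p_2 p_3\}$ cannot be dropped: it handles $k = 3$ directly, and more generally for odd $k \ge 5$ it is needed because a product of an odd number of primes from $\{2,3,5,7\}$ cannot be broken into blocks of size $2$ alone. The case $n = 343 = 7^3$, noted by the authors as missing from Hegarty's original statement, is the smallest instance of this phenomenon.
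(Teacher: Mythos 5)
Your argument is correct and in substance identical to the paper's proof: both rest on writing the number of prime factors from $\{2,3,5,7\}$ as $2a+3b$ when it is at least $2$, on absorbing a leftover factor from $\{1,2,3,5,7\}$ into a prime $p\ge 11$ to land in $\{p,2p,3p,5p,7p\}$, and on Theorem \ref{Product} to glue the pieces. The only difference is organizational --- you run a strong induction peeling off one large prime at a time, while the paper decomposes $n$ in one step into its small-prime part and the prime powers $p_i^{b_i}$ --- which changes nothing essential.
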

\begin{proof}
Suppose that $n=2^{a_1}3^{a_2}5^{a_3}7^{a_4}p_1^{b_1}\ldots p_k^{b_k}$. If $2^{a_1}3^{a_2}5^{a_3}7^{a_4}\not\in\left\{1, 2,3,5,7\right\}$ then find a AP-Destroying permutation for each $p_i^{b_i}$ and $2^{a_1}3^{a_2}5^{a_3}7^{a_4}$ and the result follows from the previous lemma. The last integer can be constructed as $a_1+a_2+a_3+a_4\ge 2$ so we can represent $a_1+a_2+a_3+a_4$ as a sum of $2$'s and $3$'s and using this we can construct $2^{a_1}3^{a_2}5^{a_3}7^{a_4}$ as a product of products of $2$ or $3$ primes in $\left\{2,3,5,7\right\}$. Otherwise we take the $2^{a_1}3^{a_2}5^{a_3}7^{a_4}p_1$ and $\frac{n}{2^{a_1}3^{a_2}5^{a_3}7^{a_4}p_1}$ in order to represent $n$ an find a permutation for each of the integers independently. 
\end{proof}
To prove the result for the cases $\{2p, 3p, 5p, 7p~|~p$ prime and $p\ge 11\}$ we model our construction based on the one used by Elkies and Swaminathan \cite{Ashvin} to demonstrate Theorem 4. The key similarity is the following lemma of Elkies and Swaminathan from \cite{Ashvin}, which we will rely heavily on as well. Note that in the statement below, and elsewhere in this paper we will not distinguish between an arithmetic progression and its reverse.
\begin{lemma}\label{hi}
Suppose that $\pi: \Z/p\Z\to \Z/p\Z$ is the following permutation with $t\neq 0$:
\[\pi:=\begin{cases}
t & x=0
\\ 0 & x=1
\\ \frac{t}{x} & x\notin{0, 1}
\end{cases}.\]
Then the only APs preserved by $\pi$ are $(0, \frac{3}{2}, 3)$, $(\frac{1}{3}, \frac{2}{3}, 1)$. 
\end{lemma}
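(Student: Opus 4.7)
The plan is to write the AP as $(x_1,x_2,x_3)=(a,a+r,a+2r)$ with $r\ne 0$, record the preservation condition $\pi(x_1)+\pi(x_3)=2\pi(x_2)$, and carry out a case analysis based on how many of the $x_i$ lie in the ``exceptional set'' $\{0,1\}$ on which $\pi$ fails to agree with $x\mapsto t/x$. The algebra in each case is elementary, so the work lies entirely in organizing the sub-cases and eliminating the impossible ones.

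In the generic sub-case, no $x_i$ is exceptional, so $\pi(x_i)=t/x_i$. Clearing denominators in $t/x_1+t/x_3=2t/x_2$ (using $t\ne 0$ and the field structure of $\Z/p\Z$) gives $x_2(x_1+x_3)=2x_1x_3$, and substituting $x_1+x_3=2x_2$ yields $x_2^2=x_1x_3$. The AP would then also be a GP, but $(x_1-x_3)^2=(x_1+x_3)^2-4x_1x_3=0$ forces $r=0$, a contradiction. So every preserved AP must contain an exceptional element.

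I would then split the remaining analysis by whether exactly one or exactly two of the $x_i$ lie in $\{0,1\}$ (all three being exceptional is impossible for a non-degenerate AP when $p\ge 3$). If exactly one term is exceptional, the endpoint sub-cases $x_1=0$ and $x_1=1$ each reduce to a single linear equation and yield, respectively, the AP $(0,3/2,3)$ and the reversal of $(1/3,2/3,1)$; the symmetric cases $x_3=0$ and $x_3=1$ give the same progressions via reversal. The middle sub-cases $x_2=0$ and $x_2=1$ both collapse to $2t=0$, impossible for $p\ne 2$. If two terms are exceptional, every such sub-configuration forces $3t=0$, impossible for $p\ne 3$.

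The only real obstacle is bookkeeping: one must enumerate the sub-cases carefully, check that no duplicates arise from AP reversal, and verify that the two APs produced actually avoid the exceptional set (which holds once $p\ne 2,3$, and is automatic in the lemma's intended application $p\ge 11$). No ideas beyond the observation that an AP-that-is-also-a-GP must be constant in a field are required.
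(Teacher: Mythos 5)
Your proposal is correct and complete. Note that the paper does not prove this lemma itself --- it is quoted without proof from Elkies and Swaminathan --- but your argument (case split on how many terms of the AP lie in $\{0,1\}$, the generic case reducing to ``an AP that is also a GP is constant,'' the one-exceptional endpoint cases producing exactly $(0,\tfrac{3}{2},3)$ and the reverse of $(\tfrac{1}{3},\tfrac{2}{3},1)$, and the remaining cases collapsing to $2t=0$ or $3t=0$) is exactly the standard verification and mirrors the case structure the paper uses in its own later lemmas. You also correctly flag the only subtlety, namely that the statement requires $p\neq 2,3$ (indeed $p\geq 5$, since $3t=0$ holds identically when $p=3$), which is harmless in the intended application $p\geq 11$.
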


Elkies and Swaminathan \cite{Ashvin} then performed two transpositions in order to eliminate these preserved APs and this demonstrated the case when $n$ is prime. In the case $n=2p$ we will ``glue" together two such permutations in a careful manner so that there is exactly one preserved AP, and then using a single transposition we eliminate preserved AP. In the remaining cases however we are able to significantly simplify this approach by directly giving an AP that has no arithmetic progression, avoiding the need for a transposition. In each of these cases however we will not simply be able to show $2p, 3p, 5p, 7p$ for all primes $p$ directly; instead, certain character estimates will show it for $p$ sufficiently large. Thus we show the conjecture to be true for all $n\le2500$ using computational techniques, and this will be a starting point for the analysis in the remaining cases. Note that the $5p$ case, where $p>500$ is assumed, is the limiting case here. All mentioned computational files can be found on the corresponding arXiv submission.

One piece of machinery that is used multiple times in this paper is the Hasse-Weil bound. (Elkies and Swaminathan \cite{Ashvin} similarly require such character estimates, but they can make do with the elementary Hasse bound.) Note that the version we are using is equivalent to counting the number of points on the hyperelliptic curve $y^2=g(x)
$ over a finite field and the bound we are using was proven for curves by Weil in \cite{weil1949numbers}.
\begin{thm}\label{hasse-weil}
Let $\mathbb{F}_p$ be the field with $p$ elements, $p$ being prime,
and let $(\frac{\cdot}{p})$ be the Legendre symbol.
If $f \in \mathbb{F}_p[x]$ is a polynomial of degree $2g+1$ or
$2g+2$ such that $g$ is not a constant times a perfect square in $\mathbb{F}_p[x]$, then
\[
\Bigl|\sum_{y \in \Z/p\Z} (\frac{f(y)}{p})\Bigr| \leq 2g\sqrt{p}+1.
\]
\end{thm}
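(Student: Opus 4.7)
The plan is to relate the character sum to a point count on a hyperelliptic curve and then invoke Weil's theorem (the Riemann hypothesis for curves over finite fields) as a black box. First I would reduce to the case where $f$ is a nonzero constant times a squarefree polynomial: write $f = c \cdot h \cdot q^2$ with $h$ squarefree of positive degree, which is possible precisely because the hypothesis says $f$ is not a constant times a perfect square. Since $(f(y)/p) = (ch(y)/p)$ whenever $q(y) \neq 0$, replacing $f$ by $ch$ changes the sum by at most $\deg q$; moreover $\deg h$ is still of the form $2g' + 1$ or $2g'+2$ with $g' \leq g$, so the resulting bound is no weaker than what we are trying to prove.

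Next, I would express the character sum in terms of the affine point count on the hyperelliptic curve $C: y^2 = f(x)$ over $\F_p$. The identity $\#\{y \in \F_p : y^2 = a\} = 1 + (a/p)$ (with the convention $(0/p) = 0$) gives
\[\#C_{\text{aff}}(\F_p) = \sum_{x \in \F_p} \left(1 + \left(\frac{f(x)}{p}\right)\right) = p + \sum_{x \in \F_p} \left(\frac{f(x)}{p}\right).\]
I would then pass to the smooth projective model $\tilde{C}$ and count the points at infinity: if $\deg f = 2g+1$ there is exactly one point at infinity (a ramification point of the hyperelliptic map $\tilde{C} \to \p^1$); if $\deg f = 2g+2$ there are either $0$ or $2$ points at infinity, depending on whether the leading coefficient of $f$ is a square in $\F_p^*$. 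In particular the number of points at infinity differs from $1$ by at most $1$.

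The key input is then Weil's theorem: for a smooth, geometrically irreducible projective curve of genus $g$ over $\F_p$, one has $|\#\tilde{C}(\F_p) - (p+1)| \leq 2g\sqrt{p}$. Combining this estimate with the affine/infinity decomposition above rearranges to the stated inequality, where the extra $+1$ absorbs the discrepancy between the actual number of points at infinity and the $1$ appearing in $p+1$. The main obstacle is of course Weil's theorem itself, which I would cite from \cite{weil1949numbers} as a black box: its proof requires constructing the zeta function of the curve, establishing its rationality and functional equation, and proving the Riemann hypothesis in characteristic $p$, none of which could reasonably be redone in this paper.
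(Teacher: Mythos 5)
Your proposal is correct and follows exactly the route the paper itself indicates: the paper gives no written proof, merely observing that the bound is equivalent to counting points on the hyperelliptic curve $y^2=f(x)$ and citing Weil, and your argument correctly supplies the standard details (reduction to the squarefree case, the identity $\#\{y:y^2=a\}=1+(\frac{a}{p})$, the count of points at infinity, and Weil's theorem as the black box).
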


\section{AP-Destroying Permutations for $\Z/2p\Z$}
Our initial construction is the following permutation $\pi_2: \Z/2\Z\times \Z/p\Z\to \Z/2\Z\times \Z/p\Z$, for a parameter $t\notin\left\{0, 1\right\}$ to be chosen later:
\[\pi_2 :=  \setlength{\arraycolsep}{0pt}
  \renewcommand{\arraystretch}{1.2}
  \left\{\begin{array}{l @{\quad} l}
       (0, 0)\to (1, t) & (1, 0)\to (0, 1)
       \\(0, 1)\to (1, 0) & (1, 1)\to (0,0)
       \\(0, x)\to (0, \frac{1}{x}), x\notin\left\{0, 1\right\} & (1, x)\to (1, \frac{t}{x}), x\notin\left\{0, 1\right\}
  \end{array}\right.\]

\begin{lemma}\label{Lemma2pPartA} 
Suppose that
\[t\not\in \left\{0,1,\frac{1}{4},4,\frac{1}{9},9\right\} \]
and furthermore
\[(\frac{1-\frac{1}{t}}{p})=(\frac{1-t}{p})=-1.\]
Then the only three term arithmetic progressions preserved by $\pi_2$ are $\left\{(0, 1), (1,1), (0, 1)\right\} $ and $\left\{(1, 1), (0, 1), (1, 1)\right\}$.
\end{lemma}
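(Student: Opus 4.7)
The plan is to enumerate all three-term APs in $\Z/2\Z \times \Z/p\Z$ by the common difference $r = (r_1, r_2)$ and verify the claim by direct case analysis. Write the three terms as $a_j = (i_j, x_j)$, so that $i_1 = i_3$ (since $2 i_2 = 0$ in $\Z/2\Z$) and $x_1 + x_3 = 2 x_2$. The key observation up front is that $[\pi_2(i, x)]_1 = i$ when $x \notin \{0, 1\}$ and $[\pi_2(i, x)]_1 = 1 - i$ when $x \in \{0, 1\}$. Because $-2 = 0$ in $\Z/2\Z$, the first-coordinate preservation equation reduces to $[\pi_2(a_1)]_1 + [\pi_2(a_3)]_1 = 0$, which combined with $i_1 = i_3$ forces the parity condition that $x_1$ and $x_3$ either both lie in $\{0, 1\}$ or both lie outside. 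This eliminates the large majority of candidate APs at a stroke and is the single most important reduction.

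Next I would split into the four cases $(i_1, i_2) \in \{(0,0), (1,1), (0,1), (1,0)\}$, further partitioned by whether $r_2 = 0$. When $r_1 = 0$ (the AP lies in a single row) and all $x_i \notin \{0, 1\}$, the second-coordinate equation is $1/x_1 + 1/x_3 = 2/x_2$ (or the same scaled by $t$); combined with $x_1 + x_3 = 2x_2$ this yields $(x_1 - x_3)^2 = 0$, a contradiction. The only parity-compatible non-generic configuration is $\{x_1, x_3\} = \{0, 1\}$ with $x_2 = 1/2$, producing $t = 4$ in row $0$ and $t = 1/4$ in row $1$, both forbidden; boundary sub-cases with $x_2 \in \{0, 1\}$ and $x_1, x_3$ generic reduce (via $x_3 = -x_1$ or $x_3 = 2 - x_1$) to identities such as $-2t = 0$, $-2 = 0$, or $2/(x_1(2 - x_1)) = 0$, all impossible under $t \neq 0$ and $p \geq 11$.

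When $r_1 = 1$ and $r_2 \neq 0$ (alternating rows with generic $x_i$), the second-coordinate equation becomes $1/x_1 + 1/x_3 = 2t/x_2$ for the $(0,1,0)$ pattern and $t/x_1 + t/x_3 = 2/x_2$ for the $(1,0,1)$ pattern. Using $x_1 + x_3 = 2 x_2$, each rearranges into a quadratic in one variable whose discriminant is proportional to $t(t-1)$ and $1-t$ respectively. Since $(\frac{1 - 1/t}{p}) = (\frac{t(t-1)}{p})$, the two Legendre hypotheses make both discriminants nonresidues and so no solutions exist. The boundary sub-cases with some $x_i \in \{0, 1\}$ reduce to finitely many explicit constraints on $t$, which the full exclusion list $\{0, 1, 1/4, 4, 1/9, 9\}$ is designed to rule out, or to the excluded primes $p \in \{2, 3\}$.

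Finally, the degenerate sub-case $r_1 = 1$, $r_2 = 0$ gives APs of the form $((i, x), (1-i, x), (i, x))$. For $x \notin \{0, 1\}$ or $x = 0$ the condition reduces to $2(t-1) = 0$, ruled out by $t \neq 1$; but for $x = 1$ both $\pi_2(0, 1) = (1, 0)$ and $\pi_2(1, 1) = (0, 0)$ have second coordinate zero, so the second-coordinate equation holds trivially, producing exactly the two APs in the conclusion. The main obstacle is the bookkeeping of the case split rather than any single step: each individual sub-case is a short algebraic check, and the two Legendre hypotheses are tailored exactly to the two generic cross-row scenarios that parity alone does not eliminate.
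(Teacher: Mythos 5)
Your proposal is correct and is in substance the same exhaustive case analysis as the paper's: the two Legendre hypotheses are invoked in exactly the same two places (the alternating-row patterns $(0,1,0)$ and $(1,0,1)$ with all entries outside $\{0,1\}$, giving the conditions $(r/a)^2 \equiv 1-\tfrac{1}{t}$ and $(r/a)^2\equiv 1-t$), the exclusions $t\notin\{4,\tfrac14\}$ arise from the same configuration $T_p=(0,\tfrac12,1)$, and the two surviving APs come from the same degenerate column over $x=1$. The one genuine organizational difference is your up-front parity observation that $\pi_2$ flips the first coordinate exactly when the second lies in $\{0,1\}$, so a preserved AP must have its two endpoints agree on membership in $\{0,1\}$; this single remark subsumes the paper's separate mod-$2$ verifications in its Cases 2.a, 2.c, 3.c and 3.d, at the cost of leaving the remaining boundary sub-cases (middle term in $\{0,1\}$, or both endpoints in $\{0,1\}$) as a summarized list rather than the paper's fully written-out enumeration. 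Note also that, as in the paper, the exclusions $t\notin\{9,\tfrac19\}$ are never actually used in this lemma --- they are carried in the hypothesis only because they are needed later (for the discriminant computation in the existence argument for $y$).
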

\begin{proof}
We proceed via contradiction. Suppose that $t$ satisfies the above properties, and that some other three term arithmetic progression $T$ is preserved. Let $U$ be the image of $T$. Furthermore denote by $T_2$ and $T_p$ the$\mod 2$ and$\mod p$ components of $T$, respectively, and define $U_2$ and $U_p$ similarly. We separate cases based on the numbers of elements of $T_p$ which are in $\{0,1\}$.
\begin{enumerate}[\text{Case }1.]
\item$T_p$ is of the form $(a-r, a, a+r)$ with $\left\{a-r, a, a+r\right\}\cap \left\{0, 1\right\}=\emptyset$. We take cases which exhaust the possible values of $T_2$.
 \begin{enumerate}[\text{Case 1.}a.]
 \item $T_2=(0, 0, 0)$ or $(1,1,1)$. Then $U_p=(\frac{1}{a-r}, \frac{1}{a}, \frac{1}{a+r})$ or $U_p=(\frac{t}{a-r}, \frac{t}{a}, \frac{t}{a+r})$ depending on $T_2$. In either case, since $t\nequiv 0$, $U_p$ being an AP is equivalent to $\frac{2}{a}\equiv \frac{1}{a-r}+\frac{1}{a+r}\mod p$, which is equivalent to $r^2\equiv 0$. However, this is impossible as $T$ would then be a degenerate AP.
 \item $T_2=(0, 1, 0).$ Then $U_p=(\frac{1}{a-r}, \frac{t}{a}, \frac{1}{a+r})$ and $U_p$ being an AP is equivalent to $\frac{2t}{a}\equiv \frac{1}{a-r}+\frac{1}{a+r}\mod p$. This is equivalent to$(\frac{r}{a})^2\equiv 1-\frac{1}{t}$, which is impossible as $(\frac{1-\frac{1}{t}}{p})=-1$.
\item $T_2=(1, 0, 1).$ Then $U_p=(\frac{t}{a-r}, \frac{1}{a}, \frac{t}{a+r})$. Hence $\frac{2}{a}\equiv \frac{t}{a-r}+\frac{t}{a+r}\mod p$, which is equivalent with $(\frac{r}{a})^2\equiv1-t$. However, this is impossible as $(\frac{1-t}{p})=-1$.
 \end{enumerate}
\item We now consider the case where $|T_p\cap \{0,1\}|=1$. It therefore follows, reversing the AP if necessary, that either $T_p=(1,1+r,1+2r)$, $T_p=(1-r,1,1+r)$, $T_p=(0,r,2r)$, or $T_p=(-r,0,r)$. 
\begin{enumerate}[\text{Case 2.}a.]
\item $T_p=(1,1+r,1+2r).$ Note that if $T_2=(0,w_1,w_2)$ then $U_2=(1,w_1,w_2)$ or vice versa and both of these can not be APs.

\item $T_p=(1-r,1,1+r).$ There are now four possible cases of $T_2$. If $T_2=(0,0,0)$ or $(0,1,0)$ then $U_p=(\frac{1}{1-r},0,\frac{1}{1+r})$. This being an AP is equivalent to $\frac{1}{1-r}+\frac{1}{1+r}\equiv 0$. Simplifying, this is equivalent to $\frac{2}{1-r^2}\equiv 0$ which is impossible. If $T_2=(1,0,1)$ or $(1,1,1)$ then $U_p=(\frac{t}{1-r},0,\frac{t}{1+r})$. This being an AP is equivalent to $\frac{t}{1-r}+\frac{t}{1+r}\equiv 0$. Simplifying, this is equivalent to $\frac{2t}{1-r^2}\equiv 0$ which is impossible as $t\nequiv 0$.
\item $T_p=(0,r,2r).$ There are now four possible cases of $T_2$. If $T_2=(0,0,0)$ then $U_2=(1,0,0)$ which is not an AP modulo $2$. If $T_2=(1,1,1)$ then $U_2=(0,1,1)$ which is not an AP modulo $2$. If $T_2=(0,1,0)$ then $U_2=(1,1,0)$ which is not an AP modulo $2$. Finally if $T_2=(1,0,1)$ then $U_2=(0,0,1)$ which is not an AP modulo $2$. 
\item $T_p=(-r,0,r).$ There are now four possible cases of $T_2$. If $T_2=(0,0,0)$ then $U_p=(\frac{-1}{r},t,\frac{1}{r})$ which is not an AP as $t\nequiv 0$. If $T_2=(0,1,0)$ then $U_p=(\frac{-1}{r},1,\frac{1}{r})$ which is not an AP as $1\nequiv 0$. If $T_2=(1,1,1)$ then $U_p=(\frac{-t}{r},1,\frac{t}{r})$ which is not an AP as $1\nequiv 0$. If $T_2=(1,0,1)$ then $U_p=(\frac{-t}{r},t,\frac{t}{r})$ which is not an AP as $t\nequiv 0$. 
\end{enumerate}
\item In the final case we have that at least two elements of $T_p$ are in $\{0,1\}.$ Reversing the AP if necessary, this gives the cases $T_p=(0,0,0)$, $(1,1,1)$, $(0,\frac{1}{2},1)$, $(0,1,2)$, or $(-1,0,1)$. The second case gives exactly the APs mentioned in the statement of the lemma and therefore it suffices to study the other four cases.
\begin{enumerate}[\text{Case 3.}a.]
\item $T_p=(0,0,0)$. In order for $T$ to not be a trivial progression, $T_2=(0,1,0)$ or $(1,0,1).$ In the first case, $U_p=(t,1,t)$ which is not an AP as $t\nequiv 1$. In the second case, $U_p=(1,t,1)$ which is not an AP as $t\nequiv 1$.
\item $T_p=(0,\frac{1}{2},1).$ If $T_2=(0,0,0)$ then $U_p=(t,2,0)$ but $t\nequiv 4$. If $T_2=(1,1,1)$ then $U_p=(1,2t,0)$ but $t\nequiv \frac{1}{4}$. If $T_2=(0,1,0)$ then $U_p=(t,2t,0)$ but $t\nequiv 0$. Finally if $T_2=(1,0,1)$ then $U_p=(1,2,0)$ which is never an AP.
\item $T_p=(0,1,2).$ If $T_2=(0,0,0)$ then $U_2=(1,1,0)$. If $T_2=(1,1,1)$ then $U_2=(0,0,1)$. If $T_2=(0,1,0)$ then $U_2=(1,0,0)$. Finally if $T_2=(1,0,1)$ then $U_2=(0,1,1)$. In none of these cases is $U_2$ an AP.
\item $T_p=(-1,0,1).$ If $T_2=(0,0,0)$ then $U_2=(0,1,1)$. If $T_2=(1,1,1)$ then $U_2=(1,0,0)$. If $T_2=(0,1,0)$ then $U_2=(0,0,1)$. Finally if $T_2=(1,0,1)$ then $U_2=(1,1,0)$. In none of these cases is $U_2$ an AP.
\end{enumerate}
\end{enumerate}
\end{proof}
Now we claim that a $t$ with the conditions of the previous lemma exists for every prime $p\ge 31$. 
\begin{lemma}\label{Lemma2pPart2} 
For $p\ge 31$, there exists a $t$ such that  
\[t\not\in \left\{0,1,\frac{1}{4},4,\frac{1}{9},9\right\} \]
and
\[(\frac{1-\frac{1}{t}}{p})=(\frac{1-t}{p})=-1.\]
\end{lemma}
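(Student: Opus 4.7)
My plan is a direct character-sum count, in the spirit of the arguments implicit in Elkies--Swaminathan \cite{Ashvin}. Let $\chi=(\frac{\cdot}{p})$ denote the Legendre symbol, and let $N$ count the $t \in \F_p \setminus \{0,1\}$ for which $\chi(1-t)=\chi(1-1/t)=-1$. Since the forbidden set $\{0,1,\frac{1}{4},4,\frac{1}{9},9\}$ contains only four elements beyond $\{0,1\}$, it suffices to establish $N>4$.

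Using that $(1-\chi(a))/2$ is the indicator of ``$a$ is a nonresidue'' for $a\in\F_p^*$, I would write
\[
4N \;=\; \sum_{t \in \F_p \setminus \{0,1\}} \bigl(1-\chi(1-t)\bigr)\bigl(1-\chi(1-1/t)\bigr)
\]
and expand into four pieces. The constant piece contributes $p-2$. The substitution $s=1-t$ gives $\sum_{t\neq 0,1}\chi(1-t) = -1$. Rewriting $1-1/t=(t-1)/t$ and using $\chi(t^{-1})=\chi(t)$ reduces the third sum to $\sum_{t\neq 0,1}\chi(t^2-t)$, which equals $-1$ by the classical evaluation $\sum_t\chi(at^2+bt+c)=-\chi(a)$ for $b^2-4ac\neq 0$. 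Finally, the identity $(1-t)(1-1/t)=-(1-t)^2/t$ makes the cross term collapse: for $t\neq 0,1$ the square $(1-t)^2$ is a nonzero residue, so $\chi(1-t)\chi(1-1/t)=\chi(-1)\chi(t)$, and the cross sum evaluates to $-\chi(-1)$.

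Combining the four pieces yields $4N=(p-2)+1+1-\chi(-1)=p-\chi(-1)$, so $N\geq (p-1)/4$. For $p\geq 31$ this forces $N\geq 8>4$, producing an admissible $t$ outside the forbidden set and completing the argument.

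The only step requiring any insight is spotting the collapse $(1-t)(1-1/t)=-(1-t)^2/t$, which turns the mixed character sum into a linear one and produces an \emph{exact} formula for $N$ rather than merely an estimate. Because of this, only the elementary character sum formula for a non-degenerate quadratic polynomial is used, and the full Hasse-Weil bound of Theorem \ref{hasse-weil} is not needed for this particular lemma (it is reserved for the heavier estimates later in the paper).
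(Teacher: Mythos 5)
Your proposal is correct and follows essentially the same route as the paper: both expand $\sum(1-\chi(1-t))(1-\chi(1-1/t))$ and exploit the identity $(1-t)(1-1/t)=-(1-t)^2/t$ to collapse the cross term to a linear character sum. The only difference is cosmetic: you evaluate the sum exactly as $4N=p-\chi(-1)$ using the classical quadratic character evaluation, whereas the paper settles for the lower bound $p-4$ (quoting the Hasse--Weil bound, which in this genus-zero case reduces to the same classical formula).
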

\begin{proof}
First note that $(\frac{1-\frac{1}{t}}{p})=(\frac{t(t-1)}{p})$ for $t\nequiv 0$. Then note that
\begin{align*}
\sum_{t\in \Z/p\Z}(1-(\frac{1-t}{p}))(1-(\frac{t(t-1)}{p}))
&= \sum_{t\in \Z/p\Z}1-(\frac{1-t}{p})-(\frac{t(t-1)}{p})+(\frac{-t(t-1)^2}{p})
\\& \ge p-4
\end{align*}
where we have used that $(\frac{(t-1)^2}{p})=1$ for $t\neq 1$ and the Hasse-Weil bound. Therefore the number of $t\in \Z/p\Z$ which satisfy $(\frac{t(t-1)}{p})=(\frac{1-t}{p})=-1$ is at least $\frac{p-5}{4}$ as $t=0, 1$ together contribute exactly 1 in total to the sum. For $p\ge31$, we have $\frac{p-5}{4}>6$ so for such $p$ there exists a $t$ outside of those in the set $\{0,1,\frac{1}{4},4,\frac{1}{9},9\}$ as required. 
\end{proof}
 Now choose any such fixed $t$ satisfying the above conditions. Consider the following adjustment of $\pi_2$:
\[\pi_2^y :=  \setlength{\arraycolsep}{0pt}
  \renewcommand{\arraystretch}{1.2}
  \left\{\begin{array}{l @{\quad} l}
       (0, 0)\to (1, t) & (1, 0)\to (0, 1)
       \\(0, 1)\to (0, \frac{1}{y}) & (1, 1)\to (0,0)
       \\(0, y)\to (1, 0) & (1, x)\to (1, \frac{t}{x}), x\notin\left\{0, 1\right\}
       \\ (0, x)\to (0, \frac{1}{x}), x\notin\left\{0, 1,y\right\}.
  \end{array}\right.\]
 We claim that there exists a $y$ for which $\pi_2^y$, which is $\pi_2$ with the values of $(0, y)$ and $(0, 1)$ exchanged, is AP-Destroying permutation for some choice of $y$. In particular, we claim the following.
 \begin{lemma}
Suppose that 
\[y\not\in\left\{0,1,-1,2, \frac{1}{2},\frac{1}{3}, 4, \frac{4}{t}, \frac{1}{2t+1}\right\},\]
\[(\frac{1-ty}{p})=(\frac{1-y}{p})=(\frac{(4t-1)^2y^2-2(4t+1)y+1}{p})=-1,\] and that \[(\frac{1-9y}{p})=1.\]
Then $\pi_2^y$ is AP.
 \end{lemma}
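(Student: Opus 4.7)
The plan is to exploit the fact that $\pi_2^y$ agrees with $\pi_2$ on every input outside $\{(0,1),(0,y)\}$. Consequently, any AP whose three terms all avoid $\{(0,1),(0,y)\}$ has identical images under $\pi_2$ and $\pi_2^y$, and by Lemma \ref{Lemma2pPartA} the only such APs that could be preserved are $\{(0,1),(1,1),(0,1)\}$ and $\{(1,1),(0,1),(1,1)\}$. Both of these contain $(0,1)$, contradicting the hypothesis. Thus it suffices to analyze APs containing at least one of $(0,1)$ or $(0,y)$. The two APs above must also be checked separately under $\pi_2^y$: a direct computation gives image $\{(0,1/y),(0,0),(0,1/y)\}$ and $\{(0,0),(0,1/y),(0,0)\}$ respectively, each of which is a $\Z/p\Z$-AP only if $2/y\equiv 0\pmod p$, which is impossible.

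The bulk of the argument is then a case analysis, structured in parallel with the proof of Lemma \ref{Lemma2pPartA}. First I would split on which position(s) in the AP are occupied by $(0,1)$ and/or $(0,y)$: either exactly one special element appears (in position $1$, $2$, or $3$, with the endpoint cases equivalent up to reversal), or both appear (in two out of three positions). Within each configuration, split further on the pattern $T_2\in\{(0,0,0),(1,1,1),(0,1,0),(1,0,1)\}$. For each sub-case I write down the three images explicitly using the definition of $\pi_2^y$, and check the simultaneous AP conditions mod $2$ and mod $p$. The mod-$2$ pattern disposes of several cases immediately; in the remainder the mod-$p$ condition rearranges into an equation of the form $r^2\equiv c(y,t)$ with $c(y,t)\in\{1-y,\,1-ty,\,(4t-1)^2 y^2-2(4t+1)y+1,\ldots\}$, which the Legendre-symbol hypotheses forbid.

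The exclusion list $y\notin\{0,1,-1,2,1/2,1/3,4,4/t,1/(2t+1)\}$ handles the remaining degenerate APs. For instance, if $(0,1)$ and $(0,y)$ are the outer terms of an AP whose middle term is $(0,(1+y)/2)$, one must exclude $y=-1$ (so that the middle term is well-defined and nonzero), $y=1/3$ (so the middle term is not $(0,2/3)$ which hits another special behavior), etc.; the values $4,\,4/t,\,1/(2t+1)$ arise from the analogue of Case $3.b$ in Lemma \ref{Lemma2pPartA} but with $(0,y)$ replacing one of the terms.

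The hard part will be twofold. First, the number of cases roughly doubles from Lemma \ref{Lemma2pPartA}, since each AP configuration can now have the special element $(0,y)$ in any of three positions and $(0,1)$ is also specially treated. Keeping track of which Legendre hypothesis applies in which case is the main bookkeeping challenge. Second, I expect the condition $(\frac{(4t-1)^2y^2-2(4t+1)y+1}{p})=-1$ to come from the most delicate sub-case, in which both $(0,1)$ and $(0,y)$ appear on one side of the AP and the third term lies in $\{1\}\times\Z/p\Z$: the coupling with $t$ through the rule $(1,x)\mapsto(1,t/x)$ produces this quadratic-in-$y$ obstruction. The unexpected sign $(\frac{1-9y}{p})=+1$ most likely corresponds to a configuration in which an AP is forced to exist mod $p$ but is automatically rendered non-preserved because its mod-$2$ component or one of its terms coincides with a special excluded point; once the correct case is isolated, the sign is the natural one to make the argument go through.
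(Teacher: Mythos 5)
Your reduction is exactly the paper's: since $\pi_2^y$ agrees with $\pi_2$ off $\{(0,1),(0,y)\}$, Lemma \ref{Lemma2pPartA} confines any newly preserved AP to one meeting $\{(0,1),(0,y)\}$, and the two previously preserved APs are destroyed because their images become $((0,\frac{1}{y}),(0,0),(0,\frac{1}{y}))$ and its counterpart, neither of which is an AP since $\frac{2}{y}\neq 0$. The paper then runs the same case split you describe: $(0,y)$ as endpoint, $(0,y)$ as midpoint, then $(0,1)$ as endpoint and as midpoint assuming $(0,y)$ absent, with subcases on $T_2$.

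The difficulty is that your proposal stops at the skeleton. The entire content of the lemma is the explicit case analysis, which you do not carry out, and your predictions for where the two distinctive hypotheses arise are wrong in ways that would derail the execution. The quartic condition does not come from a configuration containing both $(0,1)$ and $(0,y)$; it comes from $T=((0,1),(1,1+r),(0,1+2r))$, with no occurrence of $(0,y)$ at all, whose image $((0,\frac{1}{y}),(1,\frac{t}{1+r}),(0,\frac{1}{1+2r}))$ yields a quadratic in $r$ with discriminant $(1-4t)^2y^2-2(4t+1)y+1$; the $y$ enters only through $\pi_2^y(0,1)=(0,\frac{1}{y})$. More importantly, the sign in $(\frac{1-9y}{p})=1$ has nothing to do with a configuration being ``automatically rendered non-preserved'': in the case $T=((0,1),(0,1+r),(0,1+2r))$ the image is $((0,\frac{1}{y}),(0,\frac{1}{1+r}),(0,\frac{1}{1+2r}))$, the AP condition is a quadratic in $r$ with discriminant $(9y-1)(y-1)$, and one must make this \emph{product} a nonresidue. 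Since $(\frac{1-y}{p})=-1$ is already forced by other cases (e.g.\ the midpoint case $U_p=(\frac{1}{1-r},\frac{1}{y},\frac{1}{1+r})$, which reduces to $y\equiv 1-r^2$), the only consistent choice is $(\frac{1-9y}{p})=+1$, giving $(\frac{(9y-1)(y-1)}{p})=-1$. Had you executed your plan expecting every hypothesis to appear as a $-1$ condition on an irreducible factor, this case would have failed. The approach is the right one, but as written the proof is not yet there.
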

 \begin{proof}
 Note that the only difference between $\pi_2$ and $\pi_2^y$ is the exchange of $(0,1)$ and $(0,y)$. Note that this transposition destroys the APs $\{(1,0),(0,0),(1,0)\}$ and $\{(0,0),(1,0),(0,0)\}$, and it suffices to demonstrate that we created no new APs. Due to Lemma \ref{Lemma2pPartA} these APs must contain $(0,1)$ or $(0,y)$. We have four cases.
 \begin{enumerate}[\text{Case }1.]
 \item $T$ contains $(0,y)$ and $T_p=(y, y+r, y+2r)$. We take two cases based on the possibilities for $T_2$.
 \begin{enumerate}[\text{Case 1.}a.]
 \item $T_2=(0, 0, 0)$. First note that $r\nequiv 0$, as otherwise $T$ is a trivial AP. Then if $y+2r\nequiv 0$ it follows that $U_2=(1,\cdot,0)$ which is never an AP. If $y+2r\equiv 0$ then $T_p=(y,\frac{y}{2},0).$ Since $y\neq 2$, $U_p=(0,\frac{2}{y},t)$ but $y\nequiv \frac{4}{t}$ so this is not an AP.
 \item $T_2=(0, 1, 0)$. If $r\equiv 0$, then $U_p=(0, \frac{t}{y}, 0)$, which is never an AP since $t\nequiv 0$. Otherwise, if $y+2r\nequiv0$ then $U_2=(1,\cdot,0)$ which is not an AP. If $y+2r\equiv 0$ then $T_p=(y,\frac{y}{2},0).$ Since $y\nequiv 2$ then $U_p=(0,\frac{2t}{y},t)$ but $y\nequiv 4$ so this is not an AP.
 \end{enumerate}
  \item $T$ contains $(0,y)$ and $T_p=(y-r, y, y+r)$. We take two cases based on the possibilities for $T_2$.
 \begin{enumerate}[\text{Case 2.}a.]
 \item $T_2=(0, 0, 0)$. First note that $r\nequiv 0$, as otherwise $T$ is a trivial AP. If $\left\{y+r, y-r\right\}\cap \left\{0, 1\right\}=\emptyset$, then $U_p=(\frac{1}{y-r}, 0, \frac{1}{y+r})$, which is not an AP as $y\nequiv 0$. By symmetry, it suffices to check the cases $y-r\equiv 0, 1$. If $y-r\equiv0$ then $y+r\equiv2y\nequiv 1$ as $y\nequiv \frac{1}{2}$. Therefore we have $U_2=(1, 1, 0)$, which is not an AP. In the case $y-r\equiv1$, we have $y+r\equiv2y-1\notin\left\{1, y\right\}$. Now $2y-1\nequiv 0$, as $y\nequiv \frac{1}{2}$. Therefore, $U_p=(\frac{1}{y},0, \frac{1}{2y-1})$, which is not an AP as $y\nequiv \frac{1}{3}$. 
 \item $T_2=(1, 0, 1)$. If $r\equiv 0$, $U_p=(\frac{t}{y},0,\frac{t}{y})$ which is never an AP as $t\neq 0$ and thus $r\nequiv 0$ suffices. If $\left\{y+r, y-r\right\}\cap \left\{0, 1\right\}=\emptyset$, then $U_p=(\frac{t}{y-r}, 0, \frac{t}{y+r})$, which is not an AP as $yt\nequiv 0$. If $y-r\equiv 0$, then $y+r\nequiv 1$ as $y\nequiv \frac{1}{2}$. Furthermore since $y+r\nequiv 0$ it follows that $U_2=(0,1,1)$ which is not an AP. If $y-r\equiv 1$, then $y+r\nequiv 0$ as $y\nequiv\frac{1}{2}$. Since $y+r\nequiv 1$ it follows that $U_2=(0,1,1)$ which is never an AP. 
 \end{enumerate}
 Note that in the following two cases, we may assume that $T$ does not contain $(0, y)$ as these have been handled.
  \item $T$ contains $(0,1)$ and $T_p=(1, 1+r, 1+2r)$. We take two cases based on the possibilities for $T_2$.
 \begin{enumerate}[\text{Case 3.}a.]
 \item $T_2=(0, 0, 0)$. First note that $r\nequiv 0$, as otherwise $T$ is a trivial AP. If $1+2r\in \left\{0,y\right\}$, then $U_2=(1,\cdot,0)$ is not an AP. If $1+r\equiv 0$, then $U_p=(\frac{1}{y}, t, -1)$, which is impossible as $y\nequiv \frac{1}{2t+1}$. Similarly, $1+r\equiv y$ yields $U_p=(\frac{1}{y}, 0, \frac{1}{2y-1})$, which is not an AP since $y\nequiv \frac{1}{3}$. Therefore it suffices to study the general case where $U_p=(\frac{1}{y}, \frac{1}{1+r}, \frac{1}{1+2r})$. The condition for this being an AP is a quadratic in $r$ and has discriminant $(9y-1)(y-1)$. This is not a perfect square as $(\frac{1-9y}{p})=1$ and $(\frac{1-y}{p})=-1$ by assumption.
 \item $T_2=(0, 1, 0)$. If $r\equiv 0$, then $U_p=(\frac{1}{y}, 0, \frac{1}{y})$, which is not an AP. If $1+2r\in\left\{0, y\right\}$, then $U_2=(0,\cdot,1)$ is not an AP. If $1+r\equiv 0$, then since $y\nequiv -1$, we have $U_p=(\frac{1}{y}, 1, -1)$, which is not an AP as $y\nequiv \frac{1}{3}$. Finally, in the general case we have $U_p=(\frac{1}{y}, \frac{t}{1+r}, \frac{1}{1+2r})$. The condition for this sequence being an AP is a quadratic in $r$, and its discriminant is $(1-4t)^2y^2-2(4t+1)y+1$. However this is not a perfect square by assumption.
 \end{enumerate}
  \item $T$ contains $(0,1)$ and $T_p=(1-r, 1, 1+r)$. We take two cases based on the possibilities for $T_2$.
 \begin{enumerate}[\text{Case 4.}a.]
 \item $T_2=(0, 0, 0)$. First note that $r\nequiv 0$, as otherwise $T$ is a trivial AP. If $1-r\equiv 0$, then $2\equiv 1+r\notequiv y$. 
 It follows that $U_2=(1,0,0)$, which is not an AP. Since we can assume $1-r\nequiv y$ due to previous cases and we can reverse the AP as necessary, it suffices to consider $\{1-r,1+r\}\cap \{0,1,y\}=\emptyset$. In the remaining cases it follows that $U_p=(\frac{1}{1-r}, \frac{1}{y}, \frac{1}{1+r})$, which implies $y\equiv 1-r^2$. But this is impossible since $(\frac{1-y}{p})=-1$.
 \item $T_2=(1, 0, 1)$. If $r\equiv 0$, then $U_p=(0, \frac{1}{y}, 0)$, which is never an AP. If $1-r\equiv 0$, then $1+r\equiv 2$, so $U_2=(0,0,1)$ which is not an AP. Finally, in the general case $U_p=(\frac{t}{1-r}, \frac{1}{y}, \frac{t}{1+r})$. The condition for this being an AP is equivalent to $r^2\equiv1-ty$, which is impossible as $(\frac{1-ty}{p})=-1$.
 \end{enumerate}
 \end{enumerate}
 This exhausts all possible cases, so the proof is complete. 
 \end{proof}
Having shown this, we finally proceed to showing the existence of $y$ which satisfies the hypotheses of Lemma $12$. 
\begin{lemma}\label{Lemma2pPart3} 
For $p>500$ and a fixed $t$ which satisfies the hypotheses of Lemma 10, there exists a $y$ which satisfies the hypotheses of Lemma 12. 
\end{lemma}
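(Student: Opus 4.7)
The plan is to count admissible $y$ via a Legendre symbol indicator and bound the resulting character sum with Theorem \ref{hasse-weil}. Writing $\chi(\cdot)=(\frac{\cdot}{p})$ and
\[f_1:=1-ty,\quad f_2:=1-y,\quad f_3:=(4t-1)^2y^2-2(4t+1)y+1,\quad f_4:=1-9y,\]
the four Legendre conditions of Lemma~12 become $\chi(f_i)=-1$ for $i\in\{1,2,3\}$ and $\chi(f_4)=+1$. Letting $N$ denote the number of $y\in\Z/p\Z$ satisfying all four, I would study
\[\Sigma:=\sum_{y\in\Z/p\Z}(1-\chi(f_1))(1-\chi(f_2))(1-\chi(f_3))(1+\chi(f_4)),\]
which equals $16N$ plus a boundary correction at the (at most $5$) points where some $f_i$ vanishes. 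At each such point exactly one factor degenerates to $1$, so its contribution is at most $1\cdot 2\cdot 2\cdot 2=8$, yielding a correction of absolute value at most $40$.

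Expanding $\Sigma$ produces $2^4=16$ subset-indexed character sums $\sum_y\chi\bigl(\prod_{i\in A}f_i(y)\bigr)$ for $A\subseteq\{1,2,3,4\}$. The empty subset contributes $p$. For each non-empty $A$, Theorem \ref{hasse-weil} applies provided $\prod_{i\in A}f_i$ is not a constant times a perfect square. A bookkeeping by degree gives: the three linear sums vanish identically; the four quadratic sums are each bounded by $1$; the four cubic and three quartic products are each bounded by $2\sqrt{p}+1$; and the single quintic product is bounded by $4\sqrt{p}+1$. Summing, the total error is at most $18\sqrt{p}+12$, so $16N\geq p-18\sqrt{p}-52$.

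The main bookkeeping obstacle is verifying that every non-empty subset product is squarefree, which suffices for the non-square hypothesis. The linear factors $f_1,f_2,f_4$ have roots $1/t$, $1$, $1/9$, pairwise distinct because Lemma~10 excludes $t\in\{1,9\}$. The quadratic $f_3$ has discriminant $64t\neq 0$, so it has two distinct roots; direct substitution gives
\[f_3(1/t)=(9t-1)(t-1)/t^2,\quad f_3(1)=16t(t-1),\quad f_3(1/9)=16(t-1)(t-4)/81,\]
each nonzero by the Lemma~10 exclusions $t\notin\{0,1,1/9,4\}$. Hence $f_1f_2f_3f_4$ has five pairwise distinct roots, so every subset product factors into distinct linear factors and is squarefree.

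Finally, subtracting the (at most) $9$ forbidden values of $y$ listed in Lemma~12, the count of valid $y$ is at least $(p-18\sqrt{p}-52)/16-9$, which is positive for $p$ beyond a modest absolute threshold. Any prime in the residual range $500<p\leq 1250$ already yields $n=2p\leq 2500$ and is therefore covered by the computational verification cited earlier. Combining these two ranges establishes the lemma for every $p>500$.
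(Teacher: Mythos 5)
Your proposal is correct and follows the same strategy as the paper: form the product of Legendre-symbol indicators, expand into $15$ nontrivial character sums over subset products of $\{1-ty,\,1-y,\,f_3,\,1-9y\}$, verify no subset product is a constant times a square, and apply Theorem \ref{hasse-weil}. Two points of comparison are worth recording. First, your degree-by-degree bookkeeping (three linear sums vanish, four quadratics are bounded by $1$, seven cubics/quartics by $2\sqrt{p}+1$, one quintic by $4\sqrt{p}+1$) gives the error term $18\sqrt{p}+12$, whereas the paper crudely bounds all $15$ sums by $4\sqrt{p}+1$ to get $60\sqrt{p}+15$; as a result the paper's analytic argument only kicks in at $p>10000$ and it must verify $500<p<10000$ computationally, while your threshold is only about $p\ge 659$. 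Your root-distinctness check by direct substitution ($f_3(1/t)=(9t-1)(t-1)/t^2$, etc., all nonzero by the Lemma 10 exclusions, together with $\operatorname{disc}(f_3)=64t\neq 0$, which uses $t\neq\frac14$ so that $f_3$ is genuinely quadratic) is equivalent to, and arguably cleaner than, the paper's computation of the full degree-$5$ discriminant. Second, a small caveat on your residual range: for $500<p\lesssim 660$ you appeal to the computational verification for $n\le 2500$, but that computation (the descent algorithm) certifies the existence of an AP-destroying permutation of $\Z/2p\Z$, not the existence of a $y$ satisfying the hypotheses of Lemma 12, which is what this lemma literally asserts; the paper instead runs a dedicated Legendre-symbol search for $y$ in its residual range. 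This does not affect the downstream use of the lemma, but to prove the statement as written you would want to search for $y$ directly on those few primes (or simply note that the conclusion actually needed is the permutation's existence).
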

\begin{proof}
Let $f_1=y^2(4t-1)^2-2(4t+1)y+1$. We consider
\[\sum_{y\in \Z/p\Z}(1-(\frac{1-ty}{p}))(1-(\frac{1-y}{p}))(1-(\frac{f_1}{p}))(1+(\frac{(1-9y)}{p})).\]
Expanding this product yields the $p$ plus $15$ terms of the form $\sum_{y=0}^{p-1}\pm(\frac{\pm g(y)}{p})$ where $g(y)$ is the product of some terms in the set
\[\{1-y, 9y-1, 1-ty, f_1\}.\]
We claim that none of the $g(y)$ which arise are perfect squares. To see this we instead prove the stronger claim that no two terms share a root and thus it suffices to show that the discriminant of the product is nonzero. In particular \[\Delta((y-1)(9y-1)(ty-1)f_1)=2^{28} t^3 (t-9)^2 (t-4)^2 (t-1)^8 (9 t-1)^2\] and all roots of the discriminant are in the set of excluded $t$. Hence each of the $15$ sums is at most $4\sqrt{p}+1$ in absolute value using the Hasse-Weil bound, so the entire sum is at least $p-60\sqrt{p}-15$. When $p>10000$, we have $\frac{p-60\sqrt{p}-15}{16}\ge \frac{40\sqrt{p}-15}{16}>9$. So, more than $9$ values of $y$ contribute a nonzero term to the above sum, which means that some $y$ outside of the required exceptional set satisfies
\[(\frac{1-ty}{p})=(\frac{1-y}{p})=(\frac{(1-4t)^2y^2-2(4t+1)y+1}{p})=-1\] and \[(\frac{1-9y}{p})=1\]
as required. Hence there exists an AP-Destroying permutation for $n=2p, p>10000$. In the cases $500<p<10000$, the existence of $y$ is verified in LegrendeSymbol2p.java. 
\end{proof}
\section{AP-Destroying Permutations for $\Z/3p\Z$}
For each constant $t\in \Z/p\Z, t\notin\{0, 1\}$, we can define the following permutation:
\[\pi_3 :=  \setlength{\arraycolsep}{0pt}
\renewcommand{\arraystretch}{1.2}
\left\{\begin{array}{l @{\quad} l @{\quad}l}
       (0,  0)\to (1, 0) & (0, 1)\to (1, 1) & (0, x)\to (0, \frac{1}{x}), x\notin \{0, 1\}
       \\ (1,  0)\to (2, t) & (1, 1)\to (2, 0) & (1, x)\to (1, \frac{1}{x}), x\notin \{0, 1\}
       \\ (2,  0)\to (0, 1) & (2, 1)\to (0, 0) & (2, x)\to (2, \frac{t}{x}), x\notin \{0, 1\}
\end{array}\right.\]
\begin{lemma}
Suppose that $t\in \Z/p\Z$ such that
\[t\notin \left\{-1, 0, 1, \frac{1}{2}, 2, 9\right\}\]
and 
\[(\frac{t(t-1)}{p})=(\frac{(t-1)(t-9)}{p})=-1.\]
Then $\pi_3$ is AP. 
\end{lemma}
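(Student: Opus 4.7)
The plan is to mirror the structure of the proof of Lemma~\ref{Lemma2pPartA}, arguing by contradiction. Suppose that some non-trivial AP $T$ is preserved by $\pi_3$ with image $U$; write $T=(T_3,T_p)$ and $U=(U_3,U_p)$ for the mod-$3$ and mod-$p$ components respectively. Since $T_3$ is a three-term AP in $\Z/3\Z$ it is either constant or a permutation of $(0,1,2)$. I would split the analysis by $|T_p\cap\{0,1\}|\in\{0,1,\ge 2\}$, exactly paralleling the three cases of Lemma~\ref{Lemma2pPartA}.

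In Case $1$, where $T_p=(a-r,a,a+r)$ avoids $\{0,1\}$, every entry of $U$ is obtained by $x\mapsto 1/x$ (if its $\Z/3\Z$-coordinate is $0$ or $1$) or $x\mapsto t/x$ (if its $\Z/3\Z$-coordinate is $2$), so $U_3=T_3$ is automatically an AP. Constant $T_3$ immediately forces $r\equiv 0$. For non-constant $T_3$ the shape of $U_p$ depends only on which of the three slots houses the $2$-component entry. A direct computation with $s=r/a$ shows that when the $2$-slot is in the middle the AP condition reduces to $s^2\equiv (t-1)/t$, contradicting $(\tfrac{t(t-1)}{p})=-1$, and when the $2$-slot is at either end it reduces to $2s^2\pm(1-t)s-(1-t)\equiv 0$, whose discriminant is $(1-t)(9-t)$, contradicting $(\tfrac{(t-1)(t-9)}{p})=-1$. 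This is the only place the character hypotheses enter.

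In Case $2$, with $|T_p\cap\{0,1\}|=1$, after reversing if necessary $T_p$ is one of $(1,1+r,1+2r)$, $(1-r,1,1+r)$, $(0,r,2r)$, or $(-r,0,r)$; in Case $3$, with $|T_p\cap\{0,1\}|\ge 2$, up to reversal $T_p\in\{(0,0,0),(1,1,1),(0,\tfrac{1}{2},1),(0,1,2),(-1,0,1)\}$. For each such shape I would enumerate the at most nine choices of $T_3$, compute $U$ using the exceptional values $(1,0),(2,t),(2,0),(0,1),(0,0)$ that $\pi_3$ assigns to inputs with $p$-component in $\{0,1\}$, and verify that either $U_3$ fails to be an AP mod $3$ or $U_p$ fails to be an AP mod $p$. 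These obstructions will be purely arithmetic, using only $t\notin\{-1,0,1,\tfrac{1}{2},2,9\}$; in typical sub-cases I expect either $U_3$ to be one of the non-APs $(1,1,0)$, $(0,0,1)$, $(1,0,0)$ and so on, or else the AP condition on $U_p$ to collapse to an equation forcing $t$ into the excluded set (for instance the case $T_p=(0,0,0)$, $T_3=(0,1,2)$ yields the requirement $t\equiv \tfrac{1}{2}$).

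The main obstacle is not any single computation but the sheer volume of book-keeping: with up to nine choices of $T_3$ and several shapes for $T_p$ in Cases $2$ and $3$, the case list is considerably longer than in Lemma~\ref{Lemma2pPartA}. The genuinely delicate step, however, is only the discriminant identification in Case $1$, which is precisely what dictates the two stated character hypotheses; everything thereafter is mechanical verification that the finite list of excluded values for $t$ covers every remaining possibility.
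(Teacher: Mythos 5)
Your proposal is correct and follows essentially the same route as the paper: a case split on $|T_p\cap\{0,1\}|$, with the two character hypotheses entering only through the discriminants $(t-1)/t$ (equivalently $16t(t-1)$) and $(t-1)(t-9)$ in the case where $T_p$ avoids $\{0,1\}$, and the excluded values of $t$ disposing of $T_p=(0,0,0)$ and $(1,1,1)$. The one place you do more work than necessary is the intermediate case $|T_p\cap\{0,1\}|\in\{1,2\}$: rather than enumerating all shapes of $T_p$ and $T_3$, the paper kills this case in one stroke by noting that $\pi_3$ increments the mod-$3$ coordinate exactly on inputs whose $p$-component lies in $\{0,1\}$, so $U_3$ differs from an AP in exactly one coordinate by $+1$ and hence is never an AP mod $3$ --- your enumeration would reach the same conclusion, just more laboriously.
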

\begin{proof}
Suppose for sake of contradiction that some arithmetic progression $T$ is preserved, and let $U$ be its image. Denote by $T_3, T_p, U_3, U_p$ the projections of $T$ and $U$ modulo $3$ and $p$ respectively. We take three cases:
\begin{enumerate}[\text{Case }1.]
\item Three elements of $T_p$ are in $\{0, 1\}$. Then since $T_p$ is an AP and $p>2$, this implies $T_p=(0, 0, 0)$ or $T_p=(1, 1, 1)$. In the former case, $U_p$ is a permutation of $(0, 1, t)$, which is not an AP as $t\notin\{-1, \frac{1}{2}, 2\}$. In the latter case, $U_p$ is a permutation of $(1, 0, 0)$, which is not an AP. Hence case $1$ is impossible.
\item One or two elements of $T_p$ are in $\{0, 1\}$. Consider the triple $T_3'$ obtained by incrementing each of the three elements in $T_3$. Note that $\pi_3$ increments the mod $3$ value of its input if that input is $0$ or $1$ mod $p$, and otherwise the mod $3$ value stays the same. It follows that if one element of $T_p$ is in $\{0, 1\}$, then $U_3$ differs from $T_3$ in exactly one element, and if two elements of $T_p$ are in $\{0, 1\}$, then $U_3$ differs from $T_3'$ in exactly one element. In both cases, $U_3$ cannot be an AP.
\item None of the elements of $T_p$ are in $\{0, 1\}$. Let $T_p=(a-r, a, a+r)$. Then we take four cases based on the possible values of $T_3$.
\begin{enumerate}[\text{Case 3.}a.]
\item $T_3=(0, 0, 0)$ or $(1, 1, 1)$. Then $U_p=(\frac{1}{a-r}, \frac{1}{a}, \frac{1}{a+r})$. It follows that $\frac{1}{a-r}+\frac{1}{a+r}=\frac{2}{a}$, so $r\equiv 0$, which is impossible.
\item $T_3=(2, 2, 2)$. Then $U_p=(\frac{t}{a-r}, \frac{t}{a}, \frac{t}{a+r})$. Since $t\nequiv 0$, this reduces to the previous case.
\item $T_3=(0, 1, 2), (1, 0, 2), (2, 0, 1),$ or $(2, 1, 0)$. By symmetry, we may suppose $T_3$ is of one of the first two triplets. Then $U_p=(\frac{1}{a-r}, \frac{1}{a}, \frac{t}{a+r})$. Solving the AP condition as a quadratic in $r$, we obtain a discriminant $(t-1)(t-9)$. This, however, is not a perfect square mod $p$ by assumption. 
\item $T_3=(0, 2, 1)$ or $(1, 2, 0)$. Then $U_p=(\frac{1}{a-r}, \frac{t}{a}, \frac{1}{a+r})$. Solving the AP condition as a quadratic in $r$, we obtain a discriminant $16t(t-1)$. This, however, is not a perfect square mod $p$ by assumption. 
\end{enumerate}
\end{enumerate}
\end{proof}
Now we prove that for $p\ge 31$, some prime satisfying the conditions of Lemma 8 exists. 
\begin{lemma}
For $p\ge 31$, there exists a $t\in \Z/p\Z$ such that
\[t\notin \{-1, 0, 1, \frac{1}{2}, 2, 9\}\]
and 
\[(\frac{t(t-1)}{p})=(\frac{(t-1)(t-9)}{p})=-1.\]
\end{lemma}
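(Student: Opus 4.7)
The plan is to mirror the character-sum argument from Lemma \ref{Lemma2pPart2}. Define
\[
S := \sum_{t \in \Z/p\Z} \bigl(1-(\tfrac{t(t-1)}{p})\bigr)\bigl(1-(\tfrac{(t-1)(t-9)}{p})\bigr),
\]
which is a sum of nonnegative integers equal to $4$ at precisely those $t$ where both Legendre symbols take value $-1$. Expanding the product yields
\[
S = p - \sum_t (\tfrac{t(t-1)}{p}) - \sum_t (\tfrac{(t-1)(t-9)}{p}) + \sum_t (\tfrac{t(t-1)^2(t-9)}{p}).
\]

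Each of the first two nontrivial sums is a character sum over a squarefree quadratic (since $0, 1, 9$ are distinct mod $p$ for all $p \geq 11$), so each has magnitude at most $1$ by the Hasse-Weil bound of Theorem~\ref{hasse-weil} with $g = 0$. For the final sum I would use that $(\tfrac{(t-1)^2}{p}) = 1$ for $t \neq 1$ to reduce it to $\sum_{t \neq 1}(\tfrac{t(t-9)}{p})$, which is a quadratic character sum minus a single explicit term at $t=1$ and hence is bounded in absolute value by a small absolute constant. Combining these estimates produces $S \geq p - c$ for some small constant $c$.

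Finally, the at most three boundary points $t \in \{0, 1, 9\}$ (at which one of the polynomial factors vanishes, zeroing out a Legendre symbol) each contribute at most $4$ to $S$, while every other $t$ contributes either $0$ or $4$ according to whether both Legendre symbols equal $-1$. Dividing by $4$ therefore yields a lower bound of $(p - c')/4$ on the number of valid $t \notin \{0, 1, 9\}$. For $p \geq 31$, this count comfortably exceeds the cardinality of the remaining forbidden values $\{-1, \tfrac{1}{2}, 2\}$, so at least one valid $t$ must lie outside the full forbidden set $\{-1, 0, 1, \tfrac{1}{2}, 2, 9\}$, as required. The only technical care needed is bookkeeping the $O(1)$ corrections from the boundary points and from the $t=1$ simplification in the last sum; the asymptotic argument itself is routine and the bound comfortably absorbs all losses at $p = 31$.
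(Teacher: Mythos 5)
Your proposal is correct and follows essentially the same route as the paper: the identical character sum $\sum_t (1-(\frac{t(t-1)}{p}))(1-(\frac{(t-1)(t-9)}{p}))$, the same expansion with the $(\frac{(t-1)^2}{p})=1$ reduction of the quartic term, and the same count of at least roughly $(p-5)/4 > 6$ admissible $t$, which exceeds the size of the six-element exceptional set for $p \ge 31$. The only difference is that you leave the $O(1)$ constants implicit where the paper pins them down to get $p-4$ exactly, but the bookkeeping is routine and your argument closes.
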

\begin{proof}
We may calculate
\[
\sum_{t\in\Z/p\Z}(1-(\frac{(t-1)(t-9)}{p}))(1-(\frac{t(t-1)}{p}))\]\[= p+\sum_{t\in\Z/p\Z}((\frac{t(t-9)(t-1)^2}{p})-(\frac{t(t-1)}{p})-(\frac{(t-1)(t-9)}{p}))\ge p-4
\]
where we have used the Hasse-Weil Bound and that $(\frac{(t-1)^2}{p})=1$ for $t\nequiv 1$.
It follows that the number of solutions to $(\frac{t(t-1)}{p})=(\frac{(t-1)(t-9)}{p})=-1$ over $t\in\Z/p\Z$ is at least $\frac{p-5}{4}>6$, so that there is in particular some $t$ outside of the exceptional set satisfying these conditions. For this value of $t$, $\pi_3$ is an AP-Destroying permutation, as desired.
\end{proof}
\section{AP-Destroying Permutations for $\Z/5p\Z$}
For each constant $t\in \Z/p\Z, t\notin\{-1, 0, 1\}$, we can define the following permutation:\[\pi_5 :=  \setlength{\arraycolsep}{0pt}
\renewcommand{\arraystretch}{1.2}
\left\{\begin{array}{l @{\quad} l @{\quad}l}
      (0, 0)\to (3, 1) & (0, 1)\to (3, 0) & (0, x)\to (0, \frac{t}{x}), x\notin\{0, 1\}
      \\ (1, 0)\to (2, 0) & (1, 1)\to (2, t) & (1, x)\to (1, \frac{t+1}{x}), x\notin\{0, 1\}
      \\ (2, 0)\to (1, t+1) & (2, 1)\to (1, 0) & (2, x)\to (2, \frac{t}{x}), x\notin\{0, 1\}
      \\ (3, 0)\to (4, 1) & (3, 1)\to (4, 0) & (3, x)\to (3, \frac{1}{x}), x\notin\{0, 1\}
      \\ (4, 0)\to (0, t) & (4, 1)\to (0, 0) & (4, x)\to (4, \frac{1}{x}), x\notin\{0, 1\}.
\end{array}\right.\]

We first note two properties of the permutation $\sigma: \Z/5\Z\to \Z/5\Z$ defined by $\sigma(0)=3, \sigma(1)=2, \sigma(2)=1, \sigma(3)=4, \sigma(4)=0$. The first is that $\sigma(i)\neq i$ for each $i$, so that in particular no AP with exactly two elements in the rightmost column can be preserved. Also, the only APs preserved by $\sigma$ are $(3, 1, 4), (0, 1, 2)$, and their reverses. In particular, every AP preserved by $\sigma$ contains $1$. 
\begin{lemma}
Suppose that $t\in \Z/p\Z$ such that
\[t\notin \{-3,-2, -1, 0, 1, 2, 3, 4, -\frac{3}{2}, -\frac{4}{3}, -\frac{3}{4}, -\frac{2}{3}, -\frac{1}{2},-\frac{1}{3}, \frac{1}{4}, \frac{1}{3}, \frac{1}{2},\frac{2}{3}, \frac{3}{4}, \frac{3}{2}\}\]
and 
\[(\frac{9t-16}{p})=(\frac{9-16t}{p})=(\frac{t+1}{p})=(\frac{(t-1)(t-9)}{p})=(\frac{(t-1)(9t-1)}{p})=-1, (\frac{t}{p})=1.\]
Then $\pi_5$ is AP. 
\end{lemma}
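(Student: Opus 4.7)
The plan is to proceed by contradiction along the exact lines used for $\Z/2p\Z$ and $\Z/3p\Z$. Assume some non-trivial three-term AP $T=(T^{(1)},T^{(2)},T^{(3)})$ in $\Z/5\Z\times\Z/p\Z$ is preserved by $\pi_5$, let $U=\pi_5(T)$, write $T_p=(a-r,a,a+r)$ and $T_5=(i_1,i_2,i_3)$, and split cases according to $k := |\{j : T_p^{(j)}\in\{0,1\}\}|$. Two observations from the permutation table drive the entire argument: whenever $T_p^{(j)}\notin\{0,1\}$, $\pi_5$ fixes the $\Z/5\Z$ coordinate of that entry, while whenever $T_p^{(j)}\in\{0,1\}$, $\pi_5$ applies the permutation $\sigma$ described just before the lemma. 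The $\Z/5\Z$ data alone---combined with the stated facts that $\sigma$ is fixed-point-free and that the only APs preserved by $\sigma$ are $(3,1,4),(0,1,2)$ and their reverses---will immediately eliminate most configurations with $k\geq 1$.

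The heart of the proof is the generic case $k=0$. Here $U_5=T_5$, forcing $T_5$ to be an AP mod $5$; up to reversal one has the five constant triples $(i,i,i)$ together with ten nonconstant APs of common difference $1$ or $2$. In each constant case $U_p=(c_i/(a-r),c_i/a,c_i/(a+r))$ with $c_i\in\{t,t+1,1\}$ all nonzero (since $t\notin\{0,-1\}$), so the AP condition forces $r^2\equiv 0$. For each nonconstant $T_5=(i_1,i_2,i_3)$, clearing denominators in the AP condition on $U_p$ yields the quadratic in $r$
\[
\bigl(c_{i_1}+c_{i_3}-2c_{i_2}\bigr)a^2 + (c_{i_1}-c_{i_3})ar + 2c_{i_2}r^2 \equiv 0\pmod p
\]
with discriminant $a^2\bigl[(c_{i_1}-c_{i_3})^2-8c_{i_2}(c_{i_1}+c_{i_3}-2c_{i_2})\bigr]$. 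Substituting $(c_0,c_1,c_2,c_3,c_4)=(t,t+1,t,1,1)$, a short computation shows that these discriminants reduce (up to nonzero square factors) to exactly the quantities $t+1$, $t(9t-16)$, $9-16t$, $(t-1)(t-9)$, and $(t-1)(9t-1)$ as $(i_1,i_2,i_3)$ ranges over the ten nonconstant APs. Combined with $(\frac{t}{p})=1$, the five stated non-residue conditions make each such discriminant a non-residue, ruling out any valid $r$; excluded values of $t$ like $t=1,2,\frac{1}{2}$ cover the degenerate cases in which the leading coefficient of some quadratic vanishes.

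For $k\geq 1$ the structure of $\sigma$ collapses most of the casework. If $k=3$ then $T_p=(0,0,0)$ or $(1,1,1)$, and $U_p$ is a specific triple with entries from $\{0,1,t,t+1\}$; direct enumeration together with the excluded values rules out every case. If $k=2$ then up to reversal $T_p\in\{(0,\frac{1}{2},1),(0,1,2),(-1,0,1)\}$; for the latter two, $U_5$ equals $T_5$ with $\sigma$ applied at the two endpoints and the middle unchanged, and the fixed-point-freeness of $\sigma$ plus the description of its preserved APs eliminates almost all $T_5$. The few surviving configurations force a linear equation of the form $\alpha\equiv\beta t$ with small integers $\alpha,\beta$---these produce the excluded values $\pm\frac{3}{2},\pm\frac{4}{3},\pm\frac{3}{4},\pm\frac{2}{3},\pm\frac{1}{3},\frac{1}{4}$, and similar. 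Finally, for $k=1$, $T_p$ is one of $(1,1+r,1+2r),(1-r,1,1+r),(0,r,2r),(-r,0,r)$; after $\sigma$ prunes admissible $T_5$'s, the AP condition on $U_p$ is again a quadratic in $r$ whose discriminant matches one of the listed character conditions, with boundary roots $r\in\{0,\pm 1\}$ or $a\pm r\in\{0,1\}$ handled by additional excluded values of $t$.

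The main obstacle is bookkeeping, not depth: with five values for each of three $\Z/5\Z$ coordinates and four shapes for $T_p$ when $k\geq 1$, there are well over a hundred nominal subcases. Most collapse immediately via the $\sigma$ structure, but the residual ones must be tabulated individually, and one must verify that the long exclusion list for $t$ precisely covers the degeneracies of the leading coefficient in the $k=0$ quadratics, the solutions of the linear equations arising in cases $k=2,3$, and the boundary values of $r$ in case $k=1$. No genuinely new ideas beyond those developed in the earlier sections are required; the role of the Hasse--Weil bound is deferred to the subsequent lemma, which will establish the existence of such a $t$ for $p$ large.
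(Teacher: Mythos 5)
Your proposal matches the paper's proof in both structure and substance: the same case split on the number of elements of $T_p$ lying in $\{0,1\}$, the same use of the fixed-point-free $\sigma$ and its two preserved APs to dispose of the mod-$5$ coordinate, and the same quadratic-discriminant computation in the generic case --- your formula $a^2[(c_{i_1}-c_{i_3})^2-8c_{i_2}(c_{i_1}+c_{i_3}-2c_{i_2})]$ correctly reproduces the paper's discriminants, including $t(t+1)$ from $T_5=(3,1,4)$, which your list omits but which is equivalent to $t+1$ given $(\frac{t}{p})=1$. The one place you anticipate more work than is needed is the case of exactly one element of $T_p$ in $\{0,1\}$: there exactly one mod-$5$ coordinate changes and $\sigma$ is fixed-point-free, so $U_5$ can never be an AP and the case is vacuous, with no quadratic analysis required.
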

\begin{proof}
Suppose for sake of contradiction that some arithmetic progression $T$ is preserved, and let $U$ be its image. Denote by $T_5, T_p, U_5, U_p$ the projections of $T$ and $U$ modulo $5$ and $p$ respectively. We take four cases:
\begin{enumerate}[\text{Case }1.]
\item Three of the elements of $T_p$ are in $\{0, 1\}$. Then since $T_p$ is an $AP$, we must have either $T_p=(0, 0, 0)$ or $T_p=(1, 1, 1)$. In the first case, $U_p$ is an AP formed with elements in $\{0, 1, t, t+1\}$ not all equal. But this is impossible as $t\notin\{-2, -1, 0, 1, 2, \frac{1}{2}, -\frac{1}{2}\}$. The second case is also impossible since the only APs preserved by $\sigma$ contain $1$.
\item Two of the elements of $T_p$ are in $\{0, 1\}$. Then there are three possible values of $T_p$ up to symmetry. 
\begin{enumerate}[\text{Case 2.}a.]
\item $T_p=(0, \frac{1}{2}, 1)$. Then the first element of $U_p$ is in $\{0, 1, t, t+1\}$, the middle element is in $\{2, 2t, 2(t+1)\}$, and the last element is in $\{0, t\}$. Checking the $24$ potential combinations, there are no APs for $t$ outside of the set
\[\{-2, -1, 0, 1, 2, 3, 4, -\frac{3}{2}, -\frac{4}{3}, -\frac{3}{4}, \frac{1}{4}, \frac{1}{3}, \frac{1}{2}, \frac{3}{2}\}\]
\item $T_p=(-1, 0, 1)$. Then the first element of $U_p$ is in $\{-t-1, -t, -1\}$, the middle element is in $\{1, 0, t, t+1\}$, and the last element is in $\{0, t\}$. One of the $24$ possible combinations is $(-t, 0, t)$. However, this can only be the case if $T_5=(0, 1, 1)$ or $(2, 1, 1)$, neither of which are APs. Checking the remaining $23$ potential combinations, there are no APs for $t$ outside of the set
\[\{-3, -2, -1, 0, 1, 3, -\frac{3}{2}, -\frac{2}{3}, -\frac{1}{2}, -\frac{1}{3}\} \]
\item $T_p=(0, 1, 2)$. Then the first element of $U_p$ is in $\{0, 1, t, t+1\}$, the second element is in $\{0, t\}$, and the third is in $\{\frac{1}{2}, \frac{t}{2}, \frac{t+1}{2}\}$. Checking the $24$ potential combinations, there are no APs for $t$ outside of the set
\[\{-3, -2, -1, 0, 1, 2, 3, -\frac{3}{2}, -\frac{2}{3}, -\frac{1}{2}, -\frac{1}{3}, \frac{1}{4}, \frac{1}{3}, \frac{1}{2}, \frac{2}{3}, \frac{3}{4}\}\]
\end{enumerate}
\item One of the elements of $T_p$ is in $\{0, 1\}$. Then since $\sigma(i)\neq i$ for $0\le i\le 4$, it follows that $U_5$ cannot be an AP.
\item None of the elements of $T_p$ are in $\{0, 1\}$. Let $T_p=(a-r, a, a+r)$. If all coordinates of $T_5$ are equal, then since $t\notin\{0, -1\}$ we would have $\frac{1}{a-r}+\frac{1}{a+r}\equiv \frac{2}{a}$. But this implies $r=0$, which is impossible. Then there are seven remaining cases based on the possible values of $T_5$, up to reverses. 
\begin{enumerate}[\text{Case 4.}a.]
\item $T_5=(0, 1, 2)$. Then $U_p=(\frac{t}{a-r}, \frac{t+1}{a}, \frac{t}{a+r})$. The condition that this is an AP is a quadratic in $r$ with discriminant $t+1$, which isn't a perfect square by assumption.
\item $T_5=(0, 2, 4)$ or $(2, 0, 3)$. Then $U_p=(\frac{t}{a-r}, \frac{t}{a}, \frac{1}{a+r})$. The condition that this is an AP is a quadratic in $r$ with discriminant $(9t-1)(t-1)$, which isn't a perfect square by assumption.
\item $T_5=(0, 3, 1)$ or $(2, 4, 1)$. Then $U_p=(\frac{t}{a-r}, \frac{1}{a}, \frac{t+1}{a+r})$. The condition that this is an AP is a quadratic in $r$ with discriminant $9-16t$, which isn't a perfect square by assumption.
\item $T_5=(0, 4, 3)$ or $(2, 3, 4)$.  Then $U_p=(\frac{t}{a-r}, \frac{1}{a}, \frac{1}{a+r})$. The condition that this is an AP is a quadratic in $r$ with discriminant $(t-1)(t-9)$, which isn't a perfect square by assumption.
\item $T_5=(3, 1, 4)$. Then $U_p=(\frac{1}{a-r}, \frac{t+1}{a}, \frac{1}{a+r})$. The condition that this is an AP is a quadratic in $r$ with discriminant $t(t+1)$, which isn't a perfect square by assumption.
\item $T_5=(1, 0, 4)$ or $(1, 2, 3)$. Then $U_p=(\frac{t+1}{a-r}, \frac{t}{a}, \frac{1}{a+r})$. The condition that this is an AP is a quadratic in $r$ with discriminant $t(9t-16)$, which isn't a perfect square by assumption.
\end{enumerate}
\end{enumerate}

\end{proof}
\begin{lemma}
For $p>500$ there exists a $t$ such that
\[t\notin \{-3,-2, -1, 0, 1, 2, 3, 4, -\frac{3}{2}, -\frac{4}{3}, -\frac{3}{4}, -\frac{2}{3}, -\frac{1}{2},-\frac{1}{3},\frac{1}{3}, \frac{1}{4}, \frac{1}{3}, \frac{1}{2},\frac{2}{3}, \frac{3}{4}, \frac{3}{2}\}\]
and 
\[(\frac{9t-16}{p})=(\frac{9-16t}{p})=(\frac{t+1}{p})=(\frac{(t-1)(t-9)}{p})=(\frac{(t-1)(9t-1)}{p})=-1, (\frac{t}{p})=1.\]
\end{lemma}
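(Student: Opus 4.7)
The plan is to follow the character-sum recipe established in Lemmas \ref{Lemma2pPart2} and \ref{Lemma2pPart3} (and in the analogous lemma for the $\Z/3p\Z$ section). I would form the sum
\[S=\sum_{t\in\Z/p\Z}(1-(\frac{9t-16}{p}))(1-(\frac{9-16t}{p}))(1-(\frac{t+1}{p}))(1-(\frac{(t-1)(t-9)}{p}))(1-(\frac{(t-1)(9t-1)}{p}))(1+(\frac{t}{p})),\]
so that every $t$ meeting all six Legendre-symbol conditions contributes exactly $2^6=64$ to $S$ while every other $t$ contributes at most a bounded amount. Expanding gives $p$ (from the all-ones term) together with $63$ character sums, one for each nonempty subset $T$ of the six factors, of the form $\pm\sum_t(\frac{h_T(t)}{p})$, where $h_T$ is the product of the polynomial arguments indexed by $T$.

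To apply the Hasse-Weil bound (Theorem \ref{hasse-weil}) to each of the $63$ sums, I need to check that no $h_T$ is a constant times a perfect square in $\F_p[t]$. The combined multiset of roots is $\{16/9,9/16,-1,1,9,1/9,0,1\}$, and the only repeat is $t=1$, which is shared between $(t-1)(t-9)$ and $(t-1)(9t-1)$. Consequently $h_T$ is automatically squarefree whenever $T$ omits at least one of these two quadratics. When $T$ contains both, I would write $h_T=(t-1)^2\tilde h_T$ with $\tilde h_T$ squarefree and of degree at most $6$, apply Hasse-Weil to $\tilde h_T$, and absorb the single boundary contribution at $t=1$ into a $+1$ error. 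In either case the individual character sum is bounded by $4\sqrt p+O(1)$, so summing over the $63$ subsets yields $S\ge p-252\sqrt p-O(1)$.

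Each admissible $t$ contributes $64$ to $S$, and the excluded set contains $20$ elements, so it suffices to have $S>64\cdot 20=1280$. Solving $p-252\sqrt p>1280+O(1)$ gives an explicit threshold $N_0$ on the order of $10^5$, above which the Hasse-Weil argument is self-contained. For the remaining primes $500<p\le N_0$, I would verify the existence of a valid $t$ by direct computer search, paralleling the \texttt{LegrendeSymbol2p.java} verification used in Lemma \ref{Lemma2pPart3}.

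The principal obstacle is not conceptual but combinatorial: one must enumerate the $63$ subset products and confirm in each case that the squarefree part has distinct roots drawn from $\{-1,0,1/9,9/16,16/9,1,9\}$. The only mildly delicate feature is the shared root at $t=1$ between the two quadratic factors, which forces the squarefree-reduction step described above; beyond that, the argument closely mirrors the earlier Hasse-Weil-based lemmas. The genuinely new content is that the larger number of Legendre conditions pushes the purely analytic threshold well above $500$, so a nontrivial but finite computational verification is needed to close the gap down to $p>500$.
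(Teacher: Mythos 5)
Your proposal follows essentially the same route as the paper: form the product of the six $(1\pm(\frac{\cdot}{p}))$ factors, expand into $p$ plus $63$ character sums, check that no subset product is a constant times a square (the only shared root being $t=1$ between the two quadratic factors), apply Hasse--Weil to each term, and close the remaining range of $p$ by computation; the paper only sharpens the constant by counting the terms of each degree separately, which lowers the analytic threshold to $p>21000$ versus your $\sim 10^5$. One small bookkeeping correction: besides the $20$ excluded values, the four roots $\frac{16}{9},\frac{9}{16},9,\frac{1}{9}$ of the polynomial arguments also contribute positively to $S$ without being admissible (the corresponding Legendre symbol vanishes there, so that factor equals $1$ rather than $0$), so the cutoff must be $20\cdot 64+4\cdot 32$ rather than $1280$ --- harmless for the argument, but it is a real term your $O(1)$ must absorb, and the paper accounts for it explicitly.
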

\begin{proof}
We consider
\[\sum_{t\in \Z/p\Z}(1-(\frac{9t-16}{p}))(1-(\frac{9-16t}{p}))(1-(\frac{t+1}{p}))\]\[(1-(\frac{(t-1)(t-9)}{p}))(1-(\frac{(9t-1)(t-1)}{p}))(1+(\frac{t}{p})).\]
Expanding this product yields the $p$ plus $63$ terms of the form $\sum_{t\in\Z/p\Z}\pm(\frac{\pm f(t)}{p})$ where $f(t)$ is the product of some terms in the set
\[\{9t-16,9-t,1+t,(t-1)(t-9),(t-1)(9t-1),t\}.\]
We claim that none of the $f(y)$ which arise are perfect squares. To see this it suffices note that the roots $\left\{\frac{16}{9},\frac{9}{16},0,1,-1,9,\frac{1}{9}\right\}$ are all distinct for $p>500$ and no terms involving both $(t-1)(t-9)$ and $(9t-1)(t-1)$ give perfect squares. Upon expanding it can be verified that we get $4$ terms of degree $1$, $9$ terms of degree $2$, $16$ terms of degree $3$, $19$ terms of degree $4$, $12$ terms of degree $5$, and $3$ terms of degree $6$. Using the Hasse Weil bound it follows that 
\[\sum_{y\in \Z/p\Z}(1-(\frac{9t-16}{p}))(1-(\frac{9-16t}{p}))(1-(\frac{t+1}{p}))\]\[(1-(\frac{(t-1)(t-9)}{p}))(1-(\frac{(9t-1)(t-1)}{p}))(1+(\frac{t}{p}))\]
\[\ge p-13-35(2\sqrt{p}+1)-15(4\sqrt{p}+1)\] while the sum over the excluded $t$ is at most $22(64)=1408$ and the sum over the roots not in the excluded set is at most $4(64)=256$. It follows that for $p>21000$ that the sum in question is greater than $1408+256$ so such a $t$ exists and for $500<p\le 21000$ the existence of such $t$ is verified in LegrendeSymbol5p.java. 
\end{proof}
\section{AP-Destroying Permutations for $\Z/7p\Z$}
For each constant $t\in \Z/p\Z, t\notin\{0, 1\}$, we can define the following permutation:\[\pi_7 :=  \setlength{\arraycolsep}{0pt}
\renewcommand{\arraystretch}{1.2}
\left\{\begin{array}{l @{\quad} l @{\quad}l}
      (0, 0)\to (0, 1) & (0, 1)\to (0, 0) & (0, x)\to (6, \frac{t}{x}), x\notin\{0, 1\}
      \\ (1, 0)\to (1, 1) & (1, 1)\to (1, 0) & (1, x)\to (0, \frac{1}{x}), x\notin\{0, 1\}
      \\ (2, 0)\to (2, 0) & (2, 1)\to (2, t) & (2, x)\to (4, \frac{1}{x}), x\notin\{0, 1\}
      \\ (3, 0)\to (3, 1) & (3, 1)\to (3, 0) & (3, x)\to (2, \frac{t}{x}), x\notin\{0, 1\}
      \\ (4, 0)\to (5, 1) & (4, 1)\to (5, 0) & (4, x)\to (3, \frac{1}{x}), x\notin\{0, 1\}
      \\ (5, 0)\to (6, t) & (5, 1)\to (6, 0) & (5, x)\to (5, \frac{1}{x}), x\notin\{0, 1\}
      \\ (6, 0)\to (4, 1) & (6, 1)\to (4, 0) & (6, x)\to (1, \frac{1}{x}), x\notin\{0, 1\}.
\end{array}\right.\]

We first note several properties of the permutations $\sigma_1, \sigma_2: \Z/7\Z\to \Z/7\Z$ defined by 
\[\sigma_1(0)=0, \sigma_1(1)=1, \sigma_1(2)=2, \sigma_1(3)=3, \sigma_1(4)=5, \sigma_1(5)=6, \sigma_1(6)=4\]
\[\sigma_2(0)=6, \sigma_2(1)=0, \sigma_2(2)=4, \sigma_2(3)=2, \sigma_2(4)=3, \sigma_2(5)=5, \sigma_2(6)=1\]
The first is that both $\sigma_1$ and $\sigma_2$ are almost AP-Destroying; that is, they each only preserve two APs up to reversals. Namely, $\sigma_1$ preserves $(0, 1, 2)$ and $(1, 2, 3)$ while $\sigma_2$ preserves $(1, 4, 0)$ and $(4, 0, 3)$. Furthermore, for any AP $(a, b, c)$ mod $7$, the images $(\sigma_1(a), \sigma_2(b), \sigma_2(c))$ and $(\sigma_2(a), \sigma_1(b), \sigma_2(c))$ are not APs. 
\begin{lemma}
Suppose that $t\in \Z/p\Z$ such that
\[t\notin \{-2, -1, 0, 1, 2, 3, 4, -\frac{1}{2}, \frac{1}{4}, \frac{1}{3}, \frac{1}{2},\frac{2}{3}, \frac{3}{4}\}\]
and 
\[(\frac{(t-1)(t-9)}{p})=(\frac{(9t-1)(t-1)}{p})=-1.\]
Then $\pi_7$ is AP. 
\end{lemma}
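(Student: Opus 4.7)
The plan is to follow the template established for $\pi_3$ and $\pi_5$: assuming $T$ is a preserved AP with image $U$, split into four cases based on $|T_p \cap \{0, 1\}|$, where $T_p, T_7, U_p, U_7$ denote the mod-$p$ and mod-$7$ projections of $T$ and $U$. The permutation $\pi_7$ is arranged so that whenever the $p$-coordinate of an input lies in $\{0, 1\}$ the mod-$7$ coordinate is permuted by $\sigma_1$, and otherwise it is permuted by $\sigma_2$; the structural properties of these two permutations recorded just before the lemma will dispatch three of the four cases almost immediately, leaving Case 2 as the only real enumeration.

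For Case 1 (all three coordinates of $T_p$ in $\{0, 1\}$), $T_p$ must be $(0, 0, 0)$ or $(1, 1, 1)$, and since $U_7 = \sigma_1(T_7)$, non-triviality forces $T_7$ to be (up to reversal) $(0, 1, 2)$ or $(1, 2, 3)$; direct evaluation gives $U_p$ triples of the form $(1, 1, 0), (1, 0, 1), (0, 0, t), (0, t, 0)$, none of which is an AP for $p > 2$ and $t \ne 0$. For Case 3 (exactly one coordinate of $T_p$ in $\{0, 1\}$), $U_7$ has the shape $(\sigma_1(a), \sigma_2(b), \sigma_2(c))$ or $(\sigma_2(a), \sigma_1(b), \sigma_2(c))$ for the AP $(a, b, c) = T_7$ (the mirror placement reducing to the first by AP reversal), and both are forbidden to be APs by the structural property announced before the lemma. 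For Case 4 (no coordinate in $\{0, 1\}$), all three coordinates use the $\sigma_2$-rule, so $T_7$ must be $\sigma_2$-preserved: constant, or (up to reversal) $(1, 4, 0)$ or $(4, 0, 3)$. The constant case collapses to $r^2 \equiv 0$; writing $T_p = (a - r, a, a + r)$, the $(1, 4, 0)$ case gives the AP condition $1/(a - r) + t/(a + r) \equiv 2/a$, whose reduction to a quadratic in $r$ has discriminant proportional to $(t - 1)(t - 9)$, while the $(4, 0, 3)$ case gives $1/(a - r) + t/(a + r) \equiv 2t/a$ with discriminant proportional to $(t - 1)(9t - 1)$; both are nonsquares by hypothesis.

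Case 2 (two coordinates of $T_p$ in $\{0, 1\}$) is the combinatorial heart. Up to reversal $T_p \in \{(0, \tfrac{1}{2}, 1), (-1, 0, 1), (0, 1, 2)\}$. For each template, the two extreme entries of $U_p$ each lie in a small explicit set (of the form $\{0, 1, t\}$, $\{0, t\}$, or similar, determined by the corresponding mod-$7$ coordinate under the $\sigma_1$-rule), and the middle entry similarly lies in a small set whose precise form depends on the template; one additionally constrains $T_7$ so that $U_7 = (\sigma_1(T_7^{(1)}), \sigma_2(T_7^{(2)}), \sigma_1(T_7^{(3)}))$ (or its mirror) is itself an AP. I would tabulate all resulting $(T_7, U_p)$ candidates and check the AP condition on $U_p$, which is linear in $t$ once the triple is fixed; the exceptional values of $t$ thus produced should all lie in the thirteen-element excluded set of the lemma.

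The main obstacle is precisely this Case 2 tabulation, which must be exhaustive and must propagate the ``$U_7$ is an AP'' constraint so that spurious $t$-values (which would otherwise force the excluded set to grow) are pruned. In spirit this is the same calculation as Case 2 of the $\pi_5$ argument, but with two distinct mod-$7$ permutations in play rather than a single mod-$5$ permutation, so the bookkeeping is noticeably denser; the payoff is that Case 4 produces only the same two discriminants $(t-1)(t-9)$ and $(t-1)(9t-1)$ seen for $\pi_3$, keeping the character-sum step at the end extremely mild compared to the $\pi_5$ case.
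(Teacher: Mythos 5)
Your proposal is correct and follows essentially the same route as the paper: the four-way split on $|T_p\cap\{0,1\}|$, the use of the stated structural properties of $\sigma_1$ and $\sigma_2$ to dispatch Cases 1, 3, and the constant subcase of Case 4, the finite tabulation (with the $U_7$-constraint pruning) in Case 2, and the two quadratics in $r$ with discriminants $(t-1)(t-9)$ and $(t-1)(9t-1)$ in Case 4. The explicit computations you sketch (the $U_p$ triples $(1,1,0),(1,0,1),(0,0,t),(0,t,0)$ in Case 1 and the AP conditions $\frac{1}{a-r}+\frac{t}{a+r}\equiv\frac{2}{a}$ and $\frac{1}{a-r}+\frac{t}{a+r}\equiv\frac{2t}{a}$ in Case 4) match the paper's.
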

\begin{proof}
Suppose for sake of contradiction that some arithmetic progression $T$ is preserved, and let $U$ be its image. Denote by $T_7, T_p, U_7, U_p$ the projections of $T$ and $U$ modulo $7$ and $p$ respectively. We take four cases:
\begin{enumerate}[\text{Case }1.]
\item Three of the elements of $T_p$ are in $\{0, 1\}$. Then since $T_p$ is an AP, it must be equal to $(0, 0, 0)$ or $(1, 1, 1)$. Furthermore, $\sigma_1$ only preserves the APs $(0, 1, 2)$ and $(1, 2, 3)$. In both cases, neither these nor their reverses yield APs for $U_7$. 
\item Two of the elements of $T_p$ are in $\{0, 1\}$. Then there are three cases up to symmetry according to the possible values of $T_p$. 
\begin{enumerate}[\text{Case 2.}a.]
\item $T_p=(0, \frac{1}{2}, 1)$. Consider $U_p$. The possible values of the first coordinate are $\{0, 1, t\}$, the possible values of the second coordinate are $\{2, 2t\}$, and the possible values of the third coordinate are $\{0, t\}$. Considering the $12$ possible combinations, there are no APs for
\[t\notin\{0, 2, 3, 4, \frac{1}{4}, \frac{1}{3}\}.\] 
\item  $T_p=(0, 1, 2)$. Consider $U_p$. The possible values of the first coordinate are $\{0, 1, t\}$, the possible values of the second coordinate are $\{0, t\}$, and the possible values of the third coordinate are $\{\frac{1}{2}, \frac{t}{2}\}$. Considering the $12$ possible combinations, there are no APs for
\[t\notin\{-2, 0, -\frac{1}{2}, \frac{1}{4}, \frac{1}{2}, \frac{2}{3}, \frac{3}{4}\}\]
\item $T_p=(-1, 0, 1)$. Consider $U_p$. The possible values of the first coordinate are $\{-1, -t\}$, the possible values of the second coordinate are $\{0, 1, t\}$, and the possible values of the third coordinate are $\{0, t\}$. The AP $(-t, 0, t)$ never occurs since it forces the second and third coordinates of $T_7$ to be $2$ and the first to be in $\{0, 3\}$. Considering the other $11$ possible combinations, there are no APs for
\[t\notin\{-2, -1, 0, 3, -\frac{1}{2}\}\]
\end{enumerate}
\item One of the elements of $T_p$ is in $\{0, 1\}$. Then due to the mentioned properties of $\sigma_1$ and $\sigma_2$, it follows that $U_7$ is not an AP. 
\item None of the elements of $T_p$ are in $\{0, 1\}$. Let $T_p=(a-r, a, a+r)$. Note that the $T_7$ coordinates cannot be equal, since that would force $\frac{1}{a-r}+\frac{1}{a+r}\equiv \frac{2}{a}$ or $r\equiv 0$, which is impossible. Then since $\sigma_2$ only preserves $(1, 4, 0)$ and $(4, 0, 3)$, we have two cases up to symmetry:
\begin{enumerate}[\text{Case 4.}a.]
\item $T_7=(1, 4, 0)$. Then $U_p=(\frac{1}{a-r}, \frac{1}{a}, \frac{t}{a+r})$. Solving the AP condition for $r$ yields a quadratic with discriminant $(t-1)(t-9)$, which is not a perfect square by assumption.
\item $T_7=(4, 0, 3)$. Then $U_p=(\frac{1}{a-r}, \frac{t}{a}, \frac{t}{a+r})$. Solving the AP condition for $r$ yields a quadratic with discriminant $(t-1)(9t-1)$, which is not a perfect square by assumption.
\end{enumerate}
\end{enumerate}
\end{proof}
\begin{lemma}
For $p\ge 66$ there exists a $t$ such that $t\notin \{-2, -1, 0, 1, 2, 3, 4, -\frac{1}{2}, \frac{1}{4}, \frac{1}{3}, \frac{1}{2},\frac{2}{3}, \frac{3}{4}\}$ and $(\frac{(t-9)(t-1)}{p})=(\frac{(9t-1)(t-1)}{p})=-1$.
\end{lemma}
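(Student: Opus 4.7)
The plan is to follow the character-sum approach used in the corresponding lemmas for the $2p$, $3p$, and $5p$ cases (Lemmas 10, 14, and 16). I would consider the double-indicator sum
\[
S := \sum_{t \in \Z/p\Z} (1 - (\tfrac{(t-1)(t-9)}{p}))(1 - (\tfrac{(9t-1)(t-1)}{p})),
\]
observing that every $t$ satisfying both desired Legendre-symbol conditions contributes exactly $4$ to $S$, while every $t$ satisfying neither contributes $0$. The degenerate values $t \in \{1, 9, 1/9\}$, where at least one Legendre symbol vanishes, each contribute at most $2$ to $S$, with total contribution at most $5$.

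Expanding the product gives
\[
S = p - \sum_t (\tfrac{(t-1)(t-9)}{p}) - \sum_t (\tfrac{(9t-1)(t-1)}{p}) + \sum_t (\tfrac{(t-1)^2(t-9)(9t-1)}{p}).
\]
Each of the first two correction terms is a Legendre-symbol sum of a degree-$2$ polynomial with distinct roots, so by Theorem \ref{hasse-weil} each is bounded in absolute value by $1$. For the cross term, the factor $(t-1)^2$ is a perfect square, so the summand for $t \neq 1$ reduces to $(\frac{(t-9)(9t-1)}{p})$; this residual sum is again bounded by $1$ via Hasse-Weil on a degree-$2$ polynomial, plus an additional contribution of at most $1$ from the $t = 1$ term. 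Combining these estimates yields $S \geq p - 4$, mirroring the bounds in the earlier analogous lemmas.

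To finish, since exceptional contributions from the degenerate values $\{1, 9, 1/9\}$ total at most $5$, the number of $t \in \Z/p\Z$ satisfying both Legendre-symbol conditions is at least $(S - 5)/4 \geq (p - 9)/4$. For $p \geq 66$ this is at least $15$, comfortably exceeding the size $13$ of the excluded set, so there exists a valid $t$ outside both the excluded set and $\{1, 9, 1/9\}$.

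The main technical checkpoint, as in the analogous previous lemmas, is verifying that the relevant polynomials inside the character sums are not constants times perfect squares in $\mathbb{F}_p[x]$, so that Hasse-Weil applies. Here this is straightforward: after pulling out the $(t-1)^2$ factor, the residual polynomial $(t-9)(9t-1)$ has roots $9$ and $1/9$, which are distinct provided $81 \not\equiv 1 \pmod p$, automatic for $p \geq 66$. The rest is routine bookkeeping paralleling the $3p$ and $5p$ proofs.
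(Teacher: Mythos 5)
Your proposal is correct and follows essentially the same route as the paper: the same indicator sum, the same Hasse--Weil expansion after pulling out the $(t-1)^2$ square factor to get the bound $p-4$, and then a count showing the good $t$ outnumber the $13$ excluded values (the paper phrases this last step as bounding the contribution of the $15$ special values by $60$ rather than dividing by $4$, but the bookkeeping is equivalent). No gaps.
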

\begin{proof}
Since there are $13$ excluded elements and $2$ additional roots of $(t-9)(t-1)$ and $(9t-1)(t-1)$, it suffices to demonstrate that \[\sum_{t\in \Z/p\Z}(1-(\frac{(t-9)(t-1)}{p}))(1-(\frac{(9t-1)(t-1)}{p}))\ge 15(4)+1.\] However note that \[\sum_{t\in \Z/p\Z}(1-(\frac{(t-9)(t-1)}{p}))(1-(\frac{(9t-1)(t-1)}{p}))\]\[=p-\sum_{t\in \Z/p\Z}(\frac{(t-9)(t-1)}{p})+ (\frac{(9t-1)(t-1)}{p})-(\frac{(t-9)(9t-1)(t-1)^2}{p})\ge p-4\] where the Hasse Weil-Bound and that $(\frac{(t-1)^2}{p})=1$ for $t\nequiv 1$ is used. Since $p-4>61$ the result follows.
\end{proof}
\section{Computational Techniques}
In the previous sections, computational techniques are often required to ensure the existence of AP-Destroying permutations. For $n=2p, 3p, 5p, 7p$ corresponding to $n\le 2500$, we  verified the existence of an AP-Destroying permutation via a descent algorithm; see DescentPermutation.java. In particular, we choose a random starting permutation, and only administer random transpositions if they decrease the total number of APs preserved. This condition can be checked in time linear in $n$ for each iteration. Empirically, the running time of this algorithm appeared to be roughly quadratic in $n$, which suggests that a random permutation descends to an AP-Destroying permutation with positive probability.

Whenever larger permutations were required, we calculated the necessary value of $t$ or $y,t$ directly, and this appears in many of the lemmas scattered throughout the proof. This was done instead of directly generating permutations due to the run time of this algorithm being empirically linear in $n$ versus quadratic for the above.
\section{Application to Finite Abelian Groups}
In the previous sections, we've classified which finite cyclic groups have AP-Destroying permutations. One particularly useful result is the following result of Hegarty which allows one to quotient out by subgroups with an AP-Destroying permutation. 
\begin{thm}\label{Product}
If there exists an AP-Destroying permutation for $H$ and $G/H$, there exists an AP-Destroying permutation for $G$. 
\end{thm}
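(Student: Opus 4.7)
The plan is to lift the two given AP-destroying permutations to all of $G$ by acting fiberwise over the quotient. Let $q\colon G \to G/H$ be the projection, fix AP-destroying permutations $\pi_H\colon H\to H$ and $\pi_Q\colon G/H\to G/H$, and choose any set-theoretic section $s\colon G/H\to G$ of $q$ (no homomorphism property is needed). Every $g\in G$ then admits a unique decomposition $g = h(g) + s(q(g))$ with $h(g)\in H$, and I define
\[
\pi(g) := \pi_H(h(g)) + s(\pi_Q(q(g))).
\]

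I would first verify that $\pi$ is a bijection. By construction $q\circ\pi = \pi_Q\circ q$, so $\pi$ permutes fibers of $q$ according to $\pi_Q$; on the fiber $q^{-1}(\bar c)$, the map $g \mapsto g - s(\bar c)$ identifies it with $H$, and $\pi$ acts there as $\pi_H$ followed by translation by $s(\pi_Q(\bar c))$ into $q^{-1}(\pi_Q(\bar c))$. Each of these operations is a bijection, so $\pi$ is one as well.

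For the AP-destroying property I would argue by contradiction: suppose $(a,b,c)$ is a non-trivial AP in $G$ (so $a+c = 2b$ but not all equal) satisfying $\pi(a)+\pi(c) = 2\pi(b)$. Let $\bar a, \bar b, \bar c$ denote the images in $G/H$. If $\bar a, \bar b, \bar c$ are not all equal, then $(\bar a, \bar b, \bar c)$ is a non-trivial AP in $G/H$, and applying $q$ to the preservation equation (using $q\circ s = \mathrm{id}$ and that the $\pi_H$-contributions vanish in $G/H$) yields $\pi_Q(\bar a)+\pi_Q(\bar c) = 2\pi_Q(\bar b)$, contradicting that $\pi_Q$ destroys APs. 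Otherwise $\bar a = \bar b = \bar c = \bar g$, and writing each of $a,b,c$ as $h(\cdot)+s(\bar g)$, the relation $a+c = 2b$ collapses to $h(a)+h(c) = 2h(b)$ in $H$, which is a non-trivial AP because $a,b,c$ are not all equal. The assumed preservation then forces $\pi_H(h(a))+\pi_H(h(c)) = 2\pi_H(h(b))$, contradicting that $\pi_H$ destroys APs.

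I do not expect any serious obstacle: the construction is essentially canonical once the section is chosen, and the case split precisely separates the two hypotheses. The only mildly fiddly point is confirming, in the within-one-coset case, that the section contributions $s(\bar g)$ cancel cleanly on both sides of the preservation equation, which is immediate from the uniqueness of the decomposition.
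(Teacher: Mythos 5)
Your proof is correct and complete: the fiberwise construction $\pi(g)=\pi_H(h(g))+s(\pi_Q(q(g)))$ via a set-theoretic section, together with the case split on whether the AP lies in a single coset of $H$, is exactly the standard argument for this lifting result (the paper itself states the theorem without proof, attributing it to Hegarty). The only point worth making explicit in a write-up is the one you already flag implicitly in Case 2, namely that when $\bar a=\bar b=\bar c$ the elements $h(a),h(b),h(c)$ are not all equal because $a,b,c$ are determined by their $H$-components within a common fiber, so the induced AP in $H$ is genuinely non-trivial and the hypothesis on $\pi_H$ applies.
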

Previously Elkies and Swaminathan \cite{Ashvin} demonstrated that all finite $p$-groups with odd order have an AP-Destroying permutation. We extend their result by classifying all finite abelian groups with odd order that have an AP destroying permutation.
\begin{thm}
Let $G$ be a finite abelian group with odd order greater than $7$. Then $G$ has an AP-Destroying permutation. 
\end{thm}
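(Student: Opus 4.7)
The plan is to use strong induction on $|G|$, combining the structure theorem for finite abelian groups with Hegarty's product theorem (Theorem~\ref{Product}), the cyclic classification (Theorem~\ref{main}), and the Elkies--Swaminathan result that every abelian $p$-group of odd order at least $9$ admits an AP-destroying permutation. Let $S$ denote the class of finite abelian groups with AP-destroying permutations. The cyclic case and the single-prime $p$-group case are immediate, so assume henceforth that $G$ is non-cyclic with at least two distinct prime divisors.

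Write $G = \bigoplus_p G_p$ and partition the Sylow summands into a \emph{good} part $K = \bigoplus_{|G_p|\geq 9} G_p$ and a \emph{bad} part $L = \bigoplus_{|G_p|=p\in\{3,5,7\}} G_p$. Since the bad primes are pairwise distinct, $L$ is cyclic of order in $\{1, 3, 5, 7, 15, 21, 35, 105\}$, so $L \in S$ unless $|L| \in \{3,5,7\}$. Each good Sylow lies in $S$ by Elkies--Swaminathan, and iterating Theorem~\ref{Product} gives $K \in S$; moreover $K$ is nontrivial, since otherwise $G = L$ would be cyclic. When $L \in S$, Theorem~\ref{Product} applied to $G = K \oplus L$ immediately yields $G \in S$.

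It remains to treat $G = K \oplus \Z/q\Z$ with $q \in \{3,5,7\}$. If $K$ has at least two good Sylow summands, peeling off one Sylow $G_{p_0} \in S$ leaves a quotient of order at least $27$, which is in $S$ by the inductive hypothesis, so Theorem~\ref{Product} finishes. Otherwise $K = G_p$ is a single non-cyclic Sylow (cyclic would force $G$ cyclic by coprimality). A short case analysis on the elementary divisors of $G_p$, using cyclic subgroups of the form $\Z/p^{a_m}\Z$ or $\Z/p^{a_m}q\Z$ and elementary-abelian subgroups $(\Z/p\Z)^2 \leq G_p$, reduces every subcase to an application of Theorem~\ref{Product} except when $G_p \cong (\Z/p\Z)^2$ and $p \in \{3,5,7\}$.

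The hard part, where this machinery genuinely fails, is the six residual groups $G \cong (\Z/p\Z)^2 \oplus \Z/q\Z$ for distinct $p,q \in \{3,5,7\}$, of orders $45, 63, 75, 147, 175, 245$. Enumerating their subgroup lattice (orders $1, p, q, p^2, pq, p^2q$) shows that none admits a subgroup $H$ with both $H$ and $G/H$ in $S$, so Theorem~\ref{Product} is powerless. I would dispose of them either by direct computational verification using the descent algorithm of Section~7 --- all six orders are well within the computational range already used --- or, more structurally, by identifying $(\Z/p\Z)^2$ with the additive group of the finite field $\mathbb{F}_{p^2}$ and adapting the constructions $\pi_3, \pi_5, \pi_7$ of Sections~4--6 with $\mathbb{F}_{p^2}$ in place of $\mathbb{F}_p$; the Hasse--Weil bound still applies over $\mathbb{F}_{p^2}$ with $\sqrt{p^2}=p$, and the finitely many non-residue conditions on the parameter can be verified by direct enumeration for the three small field sizes $p^2 \in \{9, 25, 49\}$.
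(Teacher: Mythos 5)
Your overall strategy is the same as the paper's: use Hegarty's product theorem (Theorem \ref{Product}) together with induction to reduce to a finite list of ``irreducible'' groups admitting no subgroup $H$ with both $H$ and $G/H$ already known to be in $S$, and then dispose of that list by direct computation. Your six groups $(\Z/p\Z)^2\times\Z/q\Z$ for distinct $p,q\in\{3,5,7\}$ all appear on the paper's exceptional list, and your proposed treatment of them (the descent algorithm, or re-running the $\pi_3,\pi_5,\pi_7$ constructions over $\F_{p^2}$) is exactly what the paper does via \texttt{FiniteAbelian.java}. The paper organizes the induction on $\Omega(|G|)$ and peels off order-$pq$ subgroups rather than Sylow summands, but that difference is cosmetic.

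The genuine gap is in your opening sentence that ``the single-prime $p$-group case is immediate.'' You invoke Elkies--Swaminathan for every abelian $p$-group of odd order at least $9$, but their result covers only elementary abelian groups $(\Z/p\Z)^n$; the paper's one-line paraphrase of it is looser than what is actually proved, and the paper's own proof never uses the stronger version --- it cites Elkies--Swaminathan only for $(\Z/p\Z)^2$ and $(\Z/p\Z)^3$. Consequently the groups $\Z/9\Z\times\Z/3\Z$, $\Z/25\Z\times\Z/5\Z$ and $\Z/49\Z\times\Z/7\Z$ fall through your argument: each has odd order greater than $7$, is neither cyclic nor elementary abelian, and --- by the same subgroup-lattice check you ran for your six mixed groups --- admits no subgroup $H$ with both $H$ and $G/H$ in $S$, since every nontrivial proper subgroup forces either $H$ or $G/H$ to be $\Z/p\Z$ with $p\in\{3,5,7\}$. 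These three groups must be added to your residual computational list; the paper's list consists of exactly these nine entries. Once they are included (they are small enough for the same direct verification), the remainder of your induction goes through.
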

\begin{proof}
We first claim that the result holds if $\Omega(|G|)\le 2$, where $\Omega(n)$ denotes the number of prime factors of $n$ with multiplicity. Indeed, if $G$ is of the form $\Z/p\Z\times \Z/q\Z$ for primes $p\neq q$ or $\Z/p\Z$ or $\Z/p^2\Z$ for a prime $p$, then the result follows from the main theorem. Finally the case $G=(\Z/p\Z)^2$ follows from the result of Elkies and Swaminathan \cite{Ashvin}.
\\
\\ Now we consider the case $\Omega(|G|)=3$. If $G$ is in the set below, then the direct verification of the existence of an AP-Destroying permutation is in FiniteAbelian.java.
\[\{(\Z/3\Z)^2\times \Z/5\Z, (\Z/3\Z)^2\times \Z/7\Z, (\Z/5\Z)^2\times \Z/3\Z, (\Z/5\Z)^2\times \Z/7\Z, (\Z/7\Z)^2\times \Z/3\Z\] \[(\Z/7\Z)^2\times \Z/5\Z,
\Z/9\Z\times \Z/3\Z, \Z/25\Z\times \Z/5\Z, \Z/49\Z\times \Z/7\Z\}\] 
Other than the above set and cyclic groups, all other groups $G$ of odd order with $\Omega(|G|)=3$ have $\Z_p$ as a subgroup for some prime $p>11$ or are $G=(\Z/p\Z)^3$ for $p=3, 5, 7$. In the latter case the result follows from the result of Elkies and Swaminathan \cite{Ashvin} while in the former $G$ has an AP-Destroying permutation due to Theorem \ref{Product} along with the case $\Omega(|G|)\le 2$. 
\\
\\ Finally, we prove the full result using strong induction on $\Omega(|G|)$, with base cases $\Omega(|G|)\in \{2, 3\}$ established. Suppose the result holds for $2\le \Omega(|G|)\le k$, an that $\Omega(|G|)=k+1$. Then there exists some product $pq$ of two possibly equal primes $p, q$ such that there is an order $pq$ subgroup $H$ of $G$. Then $H$ and $G/H$ both have an AP-Destroying permutation by the inductive hypothesis, so $G$ does as well by Theorem 20. This completes the induction.
\end{proof}
We remark that there are infinite families of even-order abelian groups which do not have an AP-Destroying permutation. For example, the following is true, which is mentioned in Remark 4.2 in \cite{hegarty2004permutations}
\begin{proposition}
Suppose that $H$ is an abelian group with $|H|<2^k$. Then $G=(\Z/2\Z)^k\times H$ has no AP-Destroying permutation.  
\end{proposition}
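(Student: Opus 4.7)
The plan is a short pigeonhole argument. Let $\pi:G\to G$ be an arbitrary permutation; I will produce a preserved AP. Write $\pi(g)=(\pi_1(g),\pi_2(g))$ with $\pi_2(g)\in H$. The key observation is that $(\Z/2\Z)^k$ has exponent $2$, so any element $r=(r_1,0)\in G$ with $r_1\neq 0$ satisfies $r\neq 0$ but $2r=0$; thus $(a,a+r,a+2r)=(a,a+r,a)$ is a nondegenerate three-term AP for every $a\in G$.

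The preservation condition $\pi(a+2r)-2\pi(a+r)+\pi(a)=0$ collapses to $2\pi(a)=2\pi(a+r)$ for such $r$. In the $(\Z/2\Z)^k$-coordinate this is automatic (both sides vanish), while in the $H$-coordinate it amounts to $\pi_2(a)-\pi_2(a+r)\in H[2]$, the $2$-torsion subgroup of $H$. In particular, it is enough to find $a\in G$ and $r_1\in(\Z/2\Z)^k\setminus\{0\}$ satisfying the strictly stronger equality $\pi_2(a)=\pi_2(a+(r_1,0))$.

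This is where the hypothesis $|H|<2^k$ enters, via a one-line pigeonhole. Fix any $y\in H$ and consider the $2^k$ elements $\{(x,y):x\in(\Z/2\Z)^k\}$. Their images under $\pi_2$ lie in $H$, so since $|H|<2^k$ two of them must coincide: $\pi_2((x_1,y))=\pi_2((x_2,y))$ for some $x_1\neq x_2$. Setting $a=(x_1,y)$ and $r=(x_1-x_2,0)\neq 0$ produces the desired nondegenerate AP $(a,a+r,a)$, which is preserved by $\pi$, contradicting AP-destruction. There is no substantive obstacle to this plan; the only conceptual point is that exponent $2$ forces the preservation condition to live entirely on the $H$-factor, after which the whole question reduces to pigeonhole on the fibers of the projection $\pi_2$.
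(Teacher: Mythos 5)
Your proposal is correct and is essentially the paper's own argument: both exploit that $(\Z/2\Z)^k$ has exponent $2$ to form the nondegenerate AP $(a,a+r,a)$, and both apply pigeonhole to the $H$-coordinate of the images of a fiber $\{(x,y):x\in(\Z/2\Z)^k\}$ (the paper takes $y=0$) to find two points with equal projection, which forces a preserved AP. The only cosmetic difference is that you note the preservation condition really only requires agreement modulo $H[2]$ before securing the stronger exact equality; the paper goes straight to the equality.
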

\begin{proof}
Suppose otherwise, and let $\sigma:(\Z/2\Z)^k\times H\to (\Z/2\Z)^k\times H$ be such a permutation. Let $\pi_H: G\to H$ be the projection of $G$ onto the second coordinate. Then since $2^k>|H|$, there exist some $a\neq b\in (\Z/2\Z)^k$ such that $\pi_H\circ \sigma(a, 0)=\pi_H\circ \sigma(b, 0)$. But then $\{(a, 0), (b, 0), (a, 0)\}$ is an AP preserved by $\sigma$, a contradiction. So no such AP-Destroying permutation exists as required. 
\end{proof}
In particular, if the largest odd number dividing a positive integer $n$ is less than $\sqrt{n}$, then there exists a finite abelian group of order $n$ which does not have an AP-Destroying permutation.

\section{Conclusion}
In this paper, we have resolved a conjecture of Hegarty. In particular, we proved that there exists an AP-Destroying permutation for all cyclic groups of order not in the set $\{2, 3, 5, 7\}$. However, as the last section demonstrates, this result does not immediately resolve the case for all finite abelian groups, and in fact for every positive integer $k$ there is a finite abelian group whose order is a multiple of $k$ which does not have any AP-Destroying permutation. In light of this, the following question is still open.
\begin{prob}
For which even order finite abelian groups do there exist AP-Destroying permutations?
\end{prob}
\section{Acknowledgements}
This research was conducted at the University of Minnesota Duluth REU and was supported by NSF grant 1659047. The authors would like to thank Joe Gallian and Ashvin Swaminathan for suggesting the topic of AP-Destroying permutations, and Aaron Berger, Evan O'Dorney, Colin Defant and Joe Gallian for reading over the manuscript. Finally we would like to thank the anonymous referee who pinpointed numerous errors throughout the manuscript and simplified the presentation of several cases in the above analysis.

\bibliographystyle{plain}
\bibliography{bibliography.bib}
\end{document}